
\documentclass[10pt,dvipdfmx]{extarticle}
\linespread{1.0}

\setlength{\textheight}{278truemm} 
\setlength{\topmargin}{-16truemm}     
\addtolength{\topmargin}{-\headheight}
\addtolength{\topmargin}{-\headsep}   
\addtolength{\textheight}{-27truemm} 
\setlength{\textwidth}{240truemm}    
\setlength{\oddsidemargin}{-15truemm} 
\setlength{\evensidemargin}{-15truemm}
\addtolength{\textwidth}{-45truemm}

\usepackage{amsmath,amsthm,amssymb,amsbsy,amsopn,mathtools,comment,bbm,enumerate,mathrsfs}
\usepackage{graphicx,color}
\usepackage[bookmarks=true,bookmarksnumbered=true,bookmarkstype=toc]{hyperref}
\usepackage{mathptmx}
\usepackage{algorithm,bm,makecell,pict2e,diagbox}
\usepackage{algorithmicx}
\usepackage{epstopdf,colortbl,multirow}
\usepackage{tikz,subfig,pgfplots}
\usetikzlibrary{matrix,arrows,intersections,shapes,plotmarks,positioning}
\usetikzlibrary{positioning}
\pgfplotsset{compat=newest}


\newtheorem{Def}{Definition}[section]
\newtheorem{theorem}[Def]{Theorem}

\newtheorem{proposition}[Def]{Proposition}

\newtheorem{assumption}[Def]{Assumption}

\makeatletter
\@addtoreset{equation}{section}
\makeatother


\title{A general approach to sample path generation of infinitely divisible processes via shot noise representation
\footnotetext[0]{
Email: raykawai@g.ecc.u-tokyo.ac.jp. Postal address: Graduate School of Arts and Sciences / Mathematics and Informatics Center, The University of Tokyo, Japan. The author would like to thank the reviewers and Sida Yuan for valuable suggestions and comments.}}
\author{\sc Reiichiro Kawai}
\date{}

\begin{document}

\maketitle

\begin{abstract}
\noindent 
We establish a sample path generation scheme in a unified manner for general multivariate infinitely divisible processes based on shot noise representation of their integrators.
The approximation is derived from the decomposition of the infinitely divisible process to three independent components based on jump sizes and timings: the large jumps over a compact time interval, small jumps over the entire time interval and large jumps over an unbounded time interval.
The first component is taken as the approximation and is much simpler than simulation of general Gaussian processes, while the latter two components are analyzed as the error.
We derive technical conditions for the two error terms to vanish in the limit and for the scaled component on small jumps to converge to a Gaussian process so as to enhance the accuracy of the weak approximation.
We provide an extensive collection of examples to highlight the wide practicality of the proposed approach.
	
\vspace{0.3em}
\noindent \textit{Keywords:} infinitely divisible laws; L\'evy processes; fractional L\'evy motions, shot noise representation; infinitely divisible processes.
	
\noindent \textit{2020 Mathematics Subject Classifications:} 60E07, 60G52, 60G51, 60F05, 65C05.
\end{abstract}

\section{Introduction}

The growing appearance of infinitely divisible processes in applied contexts naturally fuels an increasing demand for simulation methods.
Classically, the go-to paradigm for sample path generation of a stochastic process is to partition the time interval into deterministic sample points and recursively simulate the stochastic process from one sample point to the next via increments.
This may be appropriate in the case where the driving process is Gaussian, such as the fractional Brownian motion 
and the Gaussian Ornstein--Uhlenbeck process. 
However, in the case where the integrator has jump components, simulation by increments may not be viable due to a lack of efficient sampling methods for the corresponding infinitely divisible increments. 
Applied contexts which demand jump dynamics often also prefer the observation of individual jumps in sample paths. 
For example, in insurance mathematics, 
observation of individual jumps is desired for the computation of ruin times.
As sample path generation by increments cannot portray individual jumps, the paradigm falls out of favor. 

In the recent decades, an increasingly popular simulation method for L\'evy processes without Gaussian components is through the truncation of their shot noise series representation \cite{imai2011finite, imai2013numerical}.
As such representations of the L\'evy process are decompositions in terms of its individual jumps \cite{rosinski2001series}, this truncation approach fulfills the desire for a jump-based method. The extension of this truncation method for infinitely divisible processes driven by integrators without Gaussian components has made appearances in the literature based on shot noise representations via a decomposition by jump timings and sizes and the corresponding error analysis \cite{kawai2016higher, kawai2017sample}.
In short, the idea is to decompose the infinitely divisible process into three independent components: (i) the simulatable component corresponding to large jumps of the integrator within a bounded time interval, (ii) the component corresponding to small jumps which may be approximated by a Gaussian process, and (iii) the remaining component corresponding to large jumps outside of the bounded time interval. Error analysis is performed by investigating the latter two components.

In this paper, we develop the general framework of simulating infinitely divisible processes driven by L\'evy processes without Gaussian components based on their shot noise representations.
The proposed framework unifies and generalizes the existing case-by-case approaches, as summarized in \cite{2021arXiv210110533Y}, and offers a sample path generation scheme that is indeed a lot easier than simulation of general Gaussian processes.
We provide technical conditions for error analysis in terms of the integrator and the deterministic kernel and include an extensive collection of infinitely divisible processes of interest in the literature so as to exemplify the effectiveness of the proposed approach.
To maintain the flow of the paper, we collect all proofs in the Appendix.


\section{Preliminaries}\label{section preliminaries}

In what follows, we work under the probability space $(\Omega,\mathscr{F},\mathbb P)$.
Let $\mathbb N\coloneqq\{1,2,\cdots\}$ and $\mathbb N_0 \coloneqq\{0,1,2,\cdots\}$. 
Denote 
$\mathbb R^d_0\coloneqq\mathbb R^d\backslash\{0\}$ and by ${\rm Leb}(D)$ the Lebesgue measure of a set $D$.
The notation $\{\Gamma_k\}_{k\in\mathbb N}$ will be reserved throughout to denote the arrival times of the standard Poisson process.

We next review some relevant essential notions of L\'evy processes, their shot noise representations and infinitely divisible processes.
Recall that by the L\'evy-Khintchine formula, an infinitely divisible random vector without a Gaussian component has a characteristic function of the form
\begin{equation}\label{cf}
\varphi({\bf y}) = \exp\left[i\langle {\bf y},{\bf c}\rangle + \int_{\mathbb R^d}\left(e^{i\langle {\bf y},\mathbf z\rangle} - 1 - i\langle {\bf y},\mathbf z\rangle \mathbbm1_{(0,1]}(\|\mathbf z\|)\right)\, \nu(d\mathbf z)\right],\quad {\bf y}\in\mathbb R^d,
\end{equation}
where ${\bf c}\in\mathbb R^d$ corresponds to a drift component and the measure $\nu$ is referred to as a \textit{L\'evy measure} which satisfies the integrability condition $\int_{\mathbb R^d_0}(1\wedge\|\mathbf z\|^2)\, \nu(d\mathbf z)<+\infty$.
A stochastic process is infinitely divisible if its finite dimensional distributions are infinitely divisible. 
All L\'evy processes are thus infinitely divisible, whereas the class of infinitely divisible processes is more general. 
With a set of time indices $\mathcal T\subseteq\mathbb R$, let $\{X_t:t\in[0,T]\}$ be a stochastic process in $\mathbb R^d$, 
described by the stochastic integral
\begin{equation}\label{Levy-driven stochastic integral}
	X_t = \int_{\mathcal T}f(t,s)\,dL_s,\quad t\in [0,T],
\end{equation}
where $\{L_s:\,s\in\mathcal T\}$ is a L\'evy process in $\mathbb R^{d}$ over time $\mathcal T$, whose unit-time marginal is characterized by \eqref{cf}. The focus of this paper is on the simulation of such infinitely divisible processes.
Each jump of the underlying L\'evy process at time $s$ is modulated by a mapping $f$ from $[0,T]\times \mathcal{T}$ to a suitable space, so we have a L\'evy-It\^o decomposition of the form
\begin{equation}\label{levy ito for levy}
X_t = \int_{\mathcal T}\int_{\mathbb R_0^{d}} f(t,s)\mathbf z(\mu - \nu\mathbbm1_{(0,1]}(\|\mathbf z\|))(d\mathbf z,ds), \quad t\in[0,T],
\end{equation}
where $\mu(d\mathbf z,ds)$ is the Poisson random measure on $\mathbb R_0^{d}\times\mathcal T$ associated with the L\'evy process $\{L_s:s\in\mathcal T\}$ and $\nu(d\mathbf z)ds$ is the corresponding compensator.

From the L\'evy-It\^o decomposition \eqref{levy ito for levy} for L\'evy processes without Gaussian components, shot noise series representations can be derived by methods of expressing the underlying Poisson random measure as a series of Dirac delta measures scattered at random points in $\mathbb R^d_0\times\mathbb [0,+\infty)$.
We provide the most general formulation \cite{rosinski2001series}, as follows.

\begin{theorem}[Generalized shot noise method]\label{theorem generalized shot noise method}
	Suppose a L\'evy measure $\nu$ on $\mathbb R^d_0$ can be decomposed as
\begin{equation}\label{generalized shot noise decomposition}
 \nu(B) = \int_{0+}^{+\infty} \mathbb P(H(r,U)\in B)\, dr, \quad B\in\mathcal B(\mathbb R_0^d),
\end{equation}
where $U$ is a random vector in some space $\mathscr U$ and $H:(0,+\infty)\times\mathscr U\to\mathbb R^d$ is such that for every $u\in\mathscr U$, $r\mapsto \|H(r,u)\|$ is nonincreasing.
Then, where $\mathcal T\subset\mathbb R$ is a bounded time interval, it holds that
	\begin{equation}\label{generalized shot noise series}
		\{L_s:s\in\mathcal T\} \stackrel{\mathscr L}{=} \left\{\sum_{k=1}^{+\infty}\left[H\left(\frac{\Gamma_k}{{\rm Leb}(\mathcal T)},U_k\right)\mathbbm1_{[\inf\mathcal T,s]}(T_k) - {\rm Leb}((\inf\mathcal T,s])c_k\right]:s\in\mathcal T\right\},
	\end{equation}
	where $\{L_s:s\in\mathcal T\}$ is the L\'evy process characterized by L\'evy-Khintchine triple $(0,0,\nu)$ over $[0,T]$, $\{\Gamma_{k}\}_{k\in\mathbb N}$ are the arrival times of the standard Poisson process, $\{U_k\}_{k\in\mathbb N}$ are iid copies of $U$, $\{T_k\}_{k\in\mathbb N}$ are iid uniform random variables over $\mathcal T$, with mutual independence of the random sequences and $\{c_k\}_{k\in\mathbb N}$ is a sequence of centers in $\mathbb R^d$ with $c_k \coloneqq \int_{k-1}^k \mathbb E[H(s,U)\mathbbm1_{(0,1]}(\|H(s,U)\|)]\,ds$ for $k\in\mathbb N$.
\end{theorem}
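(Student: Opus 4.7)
The plan is to reduce the identity in law \eqref{generalized shot noise series} to a statement about Poisson random measures: I would show that the atomic measure
\[
N \coloneqq \sum_{k=1}^{+\infty} \delta_{\bigl(H(\Gamma_k/{\rm Leb}(\mathcal T),\,U_k),\,T_k\bigr)}
\]
on $\mathbb R_0^d \times \mathcal T$ has the same distribution as the jump measure $\mu$ of the target L\'evy process, i.e.\ is Poisson with intensity $\nu(d\mathbf z)\,ds$. Once this is established, \eqref{generalized shot noise series} amounts to rewriting the L\'evy-It\^o decomposition \eqref{levy ito for levy} through the atoms of $N$, with the centers $c_k$ absorbing the small-jump compensator.

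The Poisson random measure identification I would carry out as follows. Since $\{\Gamma_k\}_{k\in\mathbb N}$ are arrival times of a unit-rate Poisson process on $(0,+\infty)$, the rescaled points $\{\Gamma_k/{\rm Leb}(\mathcal T)\}_{k\in\mathbb N}$ form a Poisson point process of intensity ${\rm Leb}(\mathcal T)\,dr$; attaching the iid marks $(U_k,T_k)$ yields, by the marking theorem, a PRM on $(0,+\infty)\times\mathscr U\times \mathcal T$ with intensity $dr\,\mathbb P_U(du)\,dt$ (the factor ${\rm Leb}(\mathcal T)$ cancels against the density of $T_k$ on $\mathcal T$). Pushing this forward under $(r,u,t)\mapsto(H(r,u),t)$, the mapping theorem together with the decomposition hypothesis \eqref{generalized shot noise decomposition} yields a PRM on $\mathbb R_0^d\times\mathcal T$ with intensity
\[
\left(\int_0^{+\infty}\mathbb P(H(r,U)\in\cdot)\,dr\right)\otimes dt = \nu(d\mathbf z)\,dt,
\]
matching the mean measure of $\mu$. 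Transcribing \eqref{levy ito for levy} through the atoms of $N$ then expresses the large-jump part as a finite sum of $H$-values over $\{T_k\in[\inf\mathcal T,s],\,\|H\|>1\}$ and the compensated small-jump part as the corresponding indicator series offset by the deterministic centers $c_k$, which by construction satisfy $\sum_{k=1}^N c_k = \int_0^N \mathbb E[H(r,U)\mathbbm 1_{(0,1]}(\|H(r,U)\|)]\,dr$ and hence reproduce the L\'evy-It\^o compensator in the limit via \eqref{generalized shot noise decomposition}.

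The main obstacle is the almost-sure convergence of the centered series in \eqref{generalized shot noise series} as a process in $s\in\mathcal T$, since $\sum_k c_k$ itself need not converge absolutely when $\int_{\|\mathbf z\|\le 1}\|\mathbf z\|\,\nu(d\mathbf z)=+\infty$. The monotonicity hypothesis $r\mapsto \|H(r,u)\|$ nonincreasing is exactly what enables this: it guarantees that the indices with $\|H\|>1$ form a finite initial segment (so the large-jump sum is a.s.\ finite), while the tail of the compensated small-jump series is a centered $L^2$-martingale whose variance is controlled by $\int(1\wedge\|\mathbf z\|^2)\,\nu(d\mathbf z)<+\infty$ through \eqref{generalized shot noise decomposition}. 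Kolmogorov's maximal inequality then delivers uniform-in-$s$ convergence, and one finishes either by invoking the equality in law $N\stackrel{\mathscr L}{=}\mu$ directly on the resulting functional or by verifying the L\'evy-Khintchine form \eqref{cf} through Campbell's formula applied to $N$.
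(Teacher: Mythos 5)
A remark on scope first: the paper does not prove Theorem \ref{theorem generalized shot noise method} at all --- it is imported from \cite{rosinski2001series} --- so the relevant comparison is with the standard proof in that reference. Your architecture (identify the marked point process as a Poisson random measure with intensity $\nu(d\mathbf z)\,ds$ via the marking and mapping theorems, then transcribe the L\'evy--It\^o decomposition \eqref{levy ito for levy} through its atoms) is exactly that standard route, and the first half of your argument, including the cancellation of the factor ${\rm Leb}(\mathcal T)$ against the density of $T_k$, is correct.

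The gap is in the convergence step, which is the only genuinely nontrivial part of the theorem. Two problems. First, the indices $k$ with $\|H(\Gamma_k/{\rm Leb}(\mathcal T),U_k)\|>1$ do \emph{not} form an initial segment: the monotonicity is in $r$ for each fixed $u$, but the threshold radius depends on $U_k$, which varies with $k$. (The set is still a.s.\ finite, but for a different reason --- the Poisson random measure puts a.s.\ finitely many points in $\{\|\mathbf z\|>1\}\times\mathcal T$ because $\nu(\{\|\mathbf z\|>1\})\,{\rm Leb}(\mathcal T)<+\infty$ --- and that argument needs no monotonicity.) Second, and more seriously, the compensated small-jump partial sums are not a centered $L^2$-martingale in $k$: the summands are dependent through the common sequence $\{\Gamma_k\}$, and $c_k=\int_{k-1}^k\mathbb E[H(s,U)\mathbbm1_{(0,1]}(\|H(s,U)\|)]\,ds$ is not the mean of the $k$-th summand (that mean involves the ${\rm Gamma}(k,1)$ density of $\Gamma_k$, not the uniform density on $[k-1,k]$; only the aggregated sums agree, via $\sum_{k}\mathbb P(\Gamma_k\in dr)=dr$). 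Hence Kolmogorov's maximal inequality does not apply as you invoke it. This is precisely the delicate point of Rosi\'nski's proof: one first establishes a.s.\ convergence of the radius-ordered compensated Poisson integrals over $\{r\le n\}$ (standard theory of compensated Poisson integrals, using $\int(1\wedge\|\mathbf z\|^2)\,\nu(d\mathbf z)<+\infty$), and then uses the monotonicity of $r\mapsto\|H(r,u)\|$ together with an It\^o--Nisio argument to transfer that convergence to the index-ordered partial sums appearing in \eqref{generalized shot noise series}. Without that transfer step your proposal does not establish the convergence of the series, and the equality in law is asserted for an object whose existence has not been shown.
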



We remark that the decomposition \eqref{generalized shot noise decomposition} exists in a similar form  to the so-called the inverse L\'evy measure method \cite{ferguson1972representation}, whereas the resulting expression often does not offer a viable numerical method \cite{imai2013numerical}.
In fact, the decomposition \eqref{generalized shot noise decomposition} is not unique but can be employed to derive a few distinct shot noise representations of an infinitely divisible random vector via, not only the aforementioned inverse L\'evy measure method but also, the rejection, thinning and Bondesson's methods \cite{2021arXiv210110533Y}.
Thanks to the non-uniqueness, each of the three typical L\'evy measures (Section \ref{section Levy measures}) admits at least one implementable expression for simulation purposes \cite{imai2011finite, kawai2017sample}.


\section{Decomposition based on jump sizes and timings}\label{section decomposition}

Consider the infinitely divisible process $\{X_t:t\in[0,T]\}$ in $\mathbb R^d$, described by the stochastic integral \eqref{Levy-driven stochastic integral}.
We look to truncate the L\'evy measure based on a shot noise representation of the integrator to obtain simulatable and error components. 
To this end, we exclusively consider the case when the L\'evy measure $\nu$ is infinite, as otherwise, exact methods are readily available.

\begin{assumption}\label{standing assumptions}
$\nu(\mathbb{R}_0^d)=+\infty$.
\end{assumption}

To illustrate the significance of this assumption, consider a subordinator $\{L_s:\,s\in [0,1]\}$ with $c_k\equiv 0$ and $\nu(dz)=e^{-z}dz$ on $(0,+\infty)$, violating Assumption \ref{standing assumptions} with $\nu(0,+\infty)=1$.
We then have $H(r,u)=-\ln (r)$, which has only to be defined on $(0,1]$, irrespective of the second argument.
Hence, with $\mathcal{T}=[0,1]$, a shot noise representation in the framework \eqref{generalized shot noise series} is given by 
\begin{equation}\label{compound poisson}
 \{L_s:s\in [0,1]\} \stackrel{\mathscr L}{=} \left\{-\sum_{\{k\in \mathbb{N}:\,\Gamma_k\le 1\}}\ln (\Gamma_k)\mathbbm{1}_{[0,s]}(T_k):\,s\in [0,1]\right\},
\end{equation}
which is indeed an exact simulation method without the need for truncation.
Assumption \ref{standing assumptions} implies that
\begin{equation}\label{H tends to 0}
 \lim_{r\to +\infty}\|H(r,U(\omega))\|=0,
\end{equation}
for almost every $\omega \in \Omega$, since the origin is the only possible source of infinite mass of the L\'evy measure.
Note that the converse is not true.
For instance, with the aforementioned example of a subordinator with $\nu(dz)=e^{-z}dz$ on $(0,+\infty)$ and $H(r,u)=-\ln (r)$ on $(0,1]$, it still holds that $|H(r,u)|\downarrow 0$ as $r\uparrow 1$.

As in the case of L\'evy processes, shot noise representations for infinitely divisible processes converge almost surely uniformly \cite{basse2013uniform}. Moreover, if the probability space is rich enough, then one can choose the random sequences such that the shot noise representation is almost surely equal to the infinitely divisible process \cite{rosinski2018representations}.

Now, let $\{\mathcal T_n\}_{n\in\mathbb N}$ denote a nondecreasing sequence of connected open intervals over the time indices such that $\cup_{n\in\mathbb N}\mathcal T_n = \mathcal T$ and $\text{Leb}(\mathcal T_n) < +\infty$, so $n$ represents a truncation based on jump timings. Suppose a shot noise representation of the driving L\'evy process $\{L_s : s\in\mathcal T_n\}$ is given by \eqref{generalized shot noise series} with the time set $\mathcal T_n$ in lieu of $\mathcal T$.
For each $m\in \mathbb{N}$, denote by $\nu_m$ the finite L\'evy measure defined by
\begin{equation}\label{def nu_m}
 \nu_m(B):= \int_{0+}^m \mathbb P(H(r,U)\in B)\, dr, \quad B\in\mathcal B(\mathbb R_0^d),
\end{equation}
as a truncation of the decomposition \eqref{generalized shot noise decomposition}, corresponding to the Poisson truncation approximation
\begin{equation}\label{truncated series}
 \left\{\sum_{\{k\in\mathbb N:\Gamma_k\le \text{Leb}(\mathcal T_n)\nu_m(\mathbb{R}_0^d)\}}\left[H\left(\frac{\Gamma_k}{\text{Leb}(\mathcal T_n)},U_k\right)\mathbbm1_{[\inf\mathcal T_n,s]}(T_k) - \text{Leb}([\inf\mathcal T_n,s])c_k\right]:s\in\mathcal T_n\right\},
\end{equation}
via a similar truncation to the representation \eqref{compound poisson} on the basis of the Poison arrival times.
In view of \eqref{def nu_m}, the sequence of finite L\'evy measures $\{\nu_m\}_{m\in\mathbb N}$ increases to the original infinite L\'evy measure $\nu$ as $m\to+\infty$, in the sense of $\nu_m(\mathbb{R}_0^d)=\int_{0+}^m\mathbb P(H(r,U)\in \mathbb{R}_0^d)dr=m$ for all $m\in \mathbb{N}$.
Hence, we hereafter write $m$ instead of $\nu_m(\mathbb{R}_0^d)$ in the region of the summation \eqref{truncated series}.
It is worth stressing again \cite[Theorem 3.1]{imai2013numerical} that all shot noise representations, including the inverse L\'evy measure, rejection, thinning and Bondesson's methods, are well in the framework of the finite truncation \eqref{def nu_m}.

Armed with the truncation parameters $m$ and $n$ in \eqref{truncated series}, we decompose the infinitely divisible process as
\begin{equation}\label{stochastic integral decomposition}
	X_t = X_t(m,n) + Q_t(m) + R_t(m,n), \quad t\in[0,T],
\end{equation}
where
\begin{align}
	\notag X_t(m,n) &\coloneqq \int_{\mathcal T_n}\int_{\mathbb R_0^{d}}f(t,s)\mathbf z\,(\mu_m - \nu_m\mathbbm1_{(0,1]}(\|\mathbf z\|))(d\mathbf z,ds),\\
	\notag Q_t(m) &\coloneqq \int_{\mathcal T}\int_{\mathbb R_0^{d}}f(t,s)\mathbf z\,((\mu - \mu_m) - (\nu - \nu_m)\mathbbm1_{(0,1]}(\|\mathbf z\|))(d\mathbf z,ds),\\
	\notag R_t(m,n) &\coloneqq \int_{\mathcal T\backslash\mathcal T_n}\int_{\mathbb R_0^{d}}f(t,s)\mathbf z\,(\mu_m - \nu_m\mathbbm1_{(0,1]}(\|\mathbf z\|))(d\mathbf z,ds),
\end{align}
where $\mu_m(d\mathbf z,ds)$ denotes a Poisson random measure with intensity measure $\nu_m(d\mathbf z)ds$.
The stochastic processes on the right hand side of \eqref{stochastic integral decomposition} are mutually independent, thanks to the independent scattering of the Poisson random measure $\mu(d{\bf z},ds)$. 
As $\text{Leb}(\mathcal T_n)\nu_m(\mathbb R_0^{d})<+\infty$ for every $(m,n)\in\mathbb N^2$, the first component $\{X_t(m,n):t\in[0,T]\}$ can be treated as the principal component of the approximation by the Poisson truncation of shot noise representation.
The remaining two components $\{Q_t(m):t\in[0,T]\}$ and $\{R_t(m,n):t\in[0,T]\}$ are treated as the error processes for further analysis.
The stochastic process $\{Q_t(m):t\in[0,T]\}$ comprising of small jumps can sometimes be approximated by a Gaussian process under suitable technical conditions \cite{asmussen2001approximations, cohen2007gaussian}.
If $\text{Leb}(\mathcal T) < +\infty$, 
then one may set $\mathcal T_n \equiv \mathcal T$, that is, the component $\{R_t(m,n):t\in[0,T]\}$ vanishes. 
In fact, we often have $\mathcal T = [0,T]$, in which the temporal truncation via the sequence $\{\mathcal T_n\}_{n\in\mathbb N}$ is irrelevant, or $\mathcal T = \mathbb R$.
It is worth noting that the limiting process $\{\lim_{m\to +\infty}R_t(m,n):\,t\in [0,T]\}$, which is certainly well defined, can be considered as the residual component after time truncation alone and is discussed later in Section \ref{subsection R} when the integrator is the stable process.


\subsection{Simulatable component}

We first consider the simulatable process $\{X_t(m,n):t\in[0,T]\}$, where
\[
	X_t(m,n) = \int_{\mathcal T_n}\int_{\mathbb R_0^{d}}f(t,s)\mathbf z\,(\mu_m - \nu_m\mathbbm1_{(0,1]}(\|\mathbf z\|))(d\mathbf z,ds),\quad t\in[0,T].
\]
As the L\'evy measure $\nu_m$ is finite with $\nu_m(\mathbb{R}_0^d)=m$ and the time set $\mathcal T_n$ is bounded, the term $\{X_t(m,n):t\in[0,T]\}$ has almost surely finite number of jumps.
Since this component is thus exactly simulatable, we regard $X(m,n)$ as the principal approximation. We provide its shot noise representation as follows, which provides an exact simulation method for this component:
	\begin{equation}\label{principal truncation}
		\{X_t(m,n):t\in[0,T]\} \stackrel{\mathscr L}{=} \left\{\sum_{\{k\in\mathbb N:\Gamma_k\le \text{Leb}(\mathcal T_n)m\}}\left[f(t,T_k)H\left(\frac{\Gamma_k}{\text{Leb}(\mathcal T_n)},U_k\right) - c_k\int_{\mathcal T_n}f(t,s)\,ds\right] : t\in[0,T]\right\},
	\end{equation}
	where the sequences are the same as in \eqref{stochastic integral decomposition}.
For generating sample paths of $\{X_t(m,n):t\in[0,T]\}$ based on the Poisson truncation \eqref{principal truncation} for $J$ sample points $0=t_0<t_1<\cdots<t_{J-1}<t_J=T$ for some $J\in \mathbb{N}$, we provide the numerical recipe:
\begin{enumerate}[\bf Step 1.]
	\setlength{\parskip}{0cm}
	\setlength{\itemsep}{0cm}
	\item Generate a standard exponential random variable $E_1$. If $E_1 \le \text{Leb}(\mathcal T_n)m$, then assign $\Gamma_1 \leftarrow E_1$. Otherwise, return the degenerate zero process as the approximate sample path and terminate the algorithm.
	
	\item While $\Gamma_k \le \text{Leb}(\mathcal T_n)m$, generate a standard exponential random variable $E_{k+1}$ and assign $\Gamma_{k+1} \leftarrow \Gamma_k + E_{k+1}$. Denote this as $\{\Gamma_k\}_{k\in\{1,\cdots,N\}}$, where $N$ satisfies $\Gamma_N \le \text{Leb}(\mathcal T_n)m < \Gamma_{N+1}$.
	
	\item Generate a sequence $\{T_k\}_{k\in\{1,\cdots,N\}}$ of iid uniform random variables on $\mathcal T_n$.
	
	\item Generate a sequence $\{U_k\}_{k\in\{1,\cdots,N\}}$ of suitable iid random variables.
	
	\item For every $j\in\{1,\cdots,J\}$, assign $X_{t_j}(m,n) \leftarrow \sum_{k=1}^{N} (f_n(t_j,T_k)H(\Gamma_k/\text{Leb}(\mathcal T_n),U_k) - c_k\int_{\mathcal T_n}f(t_j,s)\,ds)$.
	
	\item Return $\{0,X_{t_1}(m,n),\cdots,X_{t_J}(m,n)\}$ as the positions of the approximate sample path at the sample times $\{0,t_1,\cdots,t_J\}$.
\end{enumerate}

The proposed scheme is built on rather elementary operations based on straightforward iterations without the need for sophisticated coding or a big matrix operation, such as the Cholesky decomposition of a thousand-dimensional covariance matrix for simulating general Gaussian processes by increments.
Note that the deterministic integral $\int_{\mathcal T_n}f(t_j,s)\,ds$ needs to be computed either exactly if possible or by approximation, which we leave to the user's discretion.

\subsection{Error components}

In what follows, we provide analysis of the error components with all proofs collected in the Appendix so as to maintain the flow.
We begin with assumptions to establish the limiting degeneracy of the error components.

\begin{assumption}{\rm \label{sufficient conditions for limiting degeneracy}
(a) ${\rm esssup}_{s\in \mathcal{T}} \|f(t,s)\|<+\infty$ for $t\in [0,T]$;

\noindent (b) $\int_{\mathcal{T}}\|f(t,s)\|^2ds<+\infty$ for $t\in [0,T]$;

\noindent (c) $\lim_{m\to +\infty}\int_m^{+\infty}\int_{\Omega}\|H(r,U(\omega))\|^2 \mathbb{P}(d\omega)dr= 0$.
\qed}\end{assumption}

\begin{proposition}\label{proposition limiting degeneracy}
	It holds under Assumption \ref{sufficient conditions for limiting degeneracy} that $\{Q_t(m):\,t\in [0,T]\}$ and $\{R_t(m,n):\,t\in [0,T]\}$ converge to degenerate zero processes on $[0,T]$ in the sense of finite dimensional distributions as $m\land n\to +\infty$.
\end{proposition}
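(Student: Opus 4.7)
The plan is to establish the claimed convergence by showing that the joint characteristic functions of $(Q_{t_1}(m),\ldots,Q_{t_k}(m))$ and $(R_{t_1}(m,n),\ldots,R_{t_k}(m,n))$ converge to $1$ for every fixed choice of time points $t_1,\ldots,t_k\in[0,T]$ and Fourier dual vectors $\mathbf{y}_1,\ldots,\mathbf{y}_k\in\mathbb{R}^d$. Writing $\phi(s,\mathbf{z}):=\sum_{j=1}^k\langle\mathbf{y}_j,f(t_j,s)\mathbf{z}\rangle$ and applying the L\'evy--Khintchine formula to the infinitely divisible random vectors, whose spatio-temporal L\'evy measures are $(\nu-\nu_m)(d\mathbf{z})\mathbbm{1}_{\mathcal{T}}(s)ds$ and $\nu_m(d\mathbf{z})\mathbbm{1}_{\mathcal{T}\setminus\mathcal{T}_n}(s)ds$ respectively, the task reduces to showing that
\begin{equation*}
\Psi_Q(m):=\int_{\mathcal{T}}\!\int_{\mathbb{R}_0^d}\bigl(e^{i\phi}-1-i\phi\mathbbm{1}_{(0,1]}(\|\mathbf{z}\|)\bigr)(\nu-\nu_m)(d\mathbf{z})\,ds\longrightarrow 0
\end{equation*}
as $m\to+\infty$, and analogously that $\Psi_R(m,n)\to 0$ as $m\wedge n\to+\infty$.

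The core estimate proceeds by splitting the integrand via the identity $e^{i\phi}-1-i\phi\mathbbm{1}_{\|\mathbf{z}\|\le 1}=(e^{i\phi}-1-i\phi)\mathbbm{1}_{\|\mathbf{z}\|\le 1}+(e^{i\phi}-1)\mathbbm{1}_{\|\mathbf{z}\|>1}$, applying the elementary bounds $|e^{i\phi}-1-i\phi|\le\phi^2/2$ and $|e^{i\phi}-1|\le|\phi|\wedge 2$, and using $|\phi|^2\le C(\mathbf{y})\sum_j\|f(t_j,s)\|^2\|\mathbf{z}\|^2$. For $\Psi_Q$, the small-jump contribution is then bounded above by
\begin{equation*}
\frac{C(\mathbf{y})}{2}\sum_{j=1}^k\int_{\mathcal{T}}\|f(t_j,s)\|^2 ds\cdot\int_{\|\mathbf{z}\|\le 1}\|\mathbf{z}\|^2(\nu-\nu_m)(d\mathbf{z}),
\end{equation*}
which vanishes by Assumption~\ref{sufficient conditions for limiting degeneracy}(b) together with the estimate $\int_{\|\mathbf{z}\|\le 1}\|\mathbf{z}\|^2(\nu-\nu_m)(d\mathbf{z})\le\int_m^{+\infty}\!\int_\Omega\|H(r,U(\omega))\|^2\mathbb{P}(d\omega)dr\to 0$ from~(c); the large-jump contribution is controlled by the vanishing tail mass $(\nu-\nu_m)(\{\|\mathbf{z}\|>1\})=\int_m^{+\infty}\mathbb{P}(\|H(r,U)\|>1)dr$, which tends to $0$ by the monotonicity of $r\mapsto\|H(r,U)\|$ from Theorem~\ref{theorem generalized shot noise method} and the L\'evy-measure property $\nu(\{\|\mathbf{z}\|>1\})<+\infty$.

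The same scheme applies to $\Psi_R(m,n)$ with $\nu_m$ in place of $\nu-\nu_m$ and $\mathcal{T}\setminus\mathcal{T}_n$ in place of $\mathcal{T}$: the small-jump part is bounded by the product of $\int_{\mathcal{T}\setminus\mathcal{T}_n}\|f(t_j,s)\|^2 ds\to 0$ (dominated convergence applied to (b)) and the $m$-independent constant $\int_{\|\mathbf{z}\|\le 1}\|\mathbf{z}\|^2\nu(d\mathbf{z})<+\infty$. The main technical obstacle I anticipate is the large-jump part $\int_{\mathcal{T}\setminus\mathcal{T}_n}\!\int_{\|\mathbf{z}\|>1}(|\phi|\wedge 2)\nu_m(d\mathbf{z})ds$, whose naive majorant $2\,{\rm Leb}(\mathcal{T}\setminus\mathcal{T}_n)\,\nu_m(\{\|\mathbf{z}\|>1\})$ can diverge when ${\rm Leb}(\mathcal{T})=+\infty$. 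To close this estimate I would invoke Assumption~(a) to write $|\phi|\le CM\|\mathbf{z}\|$ with $M:=\max_j{\rm esssup}_{s\in\mathcal{T}}\|f(t_j,s)\|<+\infty$, split the $s$-integration along $A_\varepsilon:=\{s\in\mathcal{T}\setminus\mathcal{T}_n:\sum_j\|f(t_j,s)\|>\varepsilon\}$--which has finite Lebesgue measure by Chebyshev applied to (b) and whose measure shrinks to $0$ as $n\to+\infty$ by dominated convergence--and its complement where $|\phi|\le C\varepsilon\|\mathbf{z}\|$, then conclude by letting $\varepsilon\downarrow 0$ after passing $m\wedge n\to+\infty$. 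This interplay of (a), (b) and $\nu(\{\|\mathbf{z}\|>1\})<+\infty$ is where I expect the bulk of the technical work.
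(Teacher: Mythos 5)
Your overall strategy is the same as the paper's: show that the finite-dimensional characteristic functions converge to $1$, bound the quadratic part of the exponent by $\int_{\mathcal T}\|f(t_j,s)\|^2ds\cdot\int_m^{+\infty}\mathbb E\|H(r,U)\|^2dr$ via Assumptions \ref{sufficient conditions for limiting degeneracy} (b)--(c), and handle the $R$ term by letting $\mathcal T_n\uparrow\mathcal T$. The small-jump estimates and the $R$-term small-jump estimate are fine as written.

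The genuine gap is in the large-jump term, and it stems from where you place the cutoff. You split at $\{\|\mathbf z\|\le 1\}$ versus $\{\|\mathbf z\|>1\}$ (the raw jump of the integrator), which leaves the piece $\int_{\mathcal T}\int_{\|\mathbf z\|>1}(e^{i\phi}-1)\,(\nu-\nu_m)(d\mathbf z)\,ds$ whose integrand is only \emph{first order} in $\|f(t_j,s)\|$ near $\phi=0$. Assumptions \ref{sufficient conditions for limiting degeneracy} (a)--(b) give $f(t,\cdot)\in L^\infty\cap L^2(\mathcal T)$ but not $L^1(\mathcal T)$, and several of the paper's target examples have ${\rm Leb}(\mathcal T)=+\infty$ with $f(t,\cdot)\notin L^1$ (e.g.\ the higher-order moving-average kernels of Section \ref{section nfsm}, which decay like $|s|^{H-1/\alpha-n}$ with exponent in $(-1,-1/2)$). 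So the majorant $2\,{\rm Leb}(\mathcal T)\,(\nu-\nu_m)(\{\|\mathbf z\|>1\})$ is $+\infty\cdot o(1)$, and this already afflicts your $\Psi_Q$ bound, not only $\Psi_R$ as you note. Your proposed $A_\varepsilon$ patch does not close it: on $\mathcal T\setminus A_\varepsilon$ the $s$-integral still runs over a set of infinite measure while your bound $(C\varepsilon\|\mathbf z\|)\wedge 2$ on the integrand has no decay in $s$, so that piece remains $+\infty\cdot o(1)$ as well. The paper's proof avoids this entirely by cutting at the \emph{modulated} jump: it writes $\varphi_a(\mathbf x;\mathbf y)=\varphi_b(\mathbf x;\mathbf y)+i\langle\mathbf y,\mathbf x\rangle\mathbbm 1_{(1,+\infty)}(\|\mathbf x\|)$ with $\mathbf x=\psi(s)\mathbf z$, so that $|\varphi_b(\mathbf x;\mathbf y)|\le\tfrac12|\langle\mathbf x,\mathbf y\rangle|^2$ and the correction obeys $\|\mathbf x\|\mathbbm 1_{(1,+\infty)}(\|\mathbf x\|)\le\|\mathbf x\|^2$; every term is then quadratic in $\psi(s)\mathbf z$ and is dominated by $\int_{\mathcal T}\|\psi(s)\|^2ds\cdot\int_m^{+\infty}\mathbb E\|H(r,U)\|^2dr\to0$. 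If you restrict to ${\rm Leb}(\mathcal T)<+\infty$ (L\'evy processes, Ornstein--Uhlenbeck, the kernel of Section \ref{section ftsm}) your argument is complete; to cover the general case you need to reorganize the exponent so that the large-jump correction is also controlled quadratically, as the paper does.
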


We have provided Assumption \ref{sufficient conditions for limiting degeneracy} (a) as a sufficient condition for Proposition \ref{proposition limiting degeneracy} as well as Theorem \ref{theorem Gaussian approximation} later, whereas it is indeed an essential requirement for sample boundedness.
That is, sample paths would otherwise be almost surely unbounded on every finite interval of positive length.
In such a case, it would be nonsensical to use truncation of shot noise representation to simulate the stochastic integral process, and even misleading if done so without the knowledge of sample path unboundedness.
This is because truncation to a finite L\'evy measure will almost surely produce a bounded sample path, which would otherwise be unbounded in the absence of truncation.
Hence, it is highly advisable to check that the stochastic process is almost surely bounded over $[0,T]$ prior to generating its sample paths.

The notion of approximating small jumps by a Gaussian process \cite{asmussen2001approximations, cohen2007gaussian} to improve the quality of the approximation can carry over to the case of some infinitely divisible processes under some technical conditions. 
Define the covariance matrix
\[
\sigma_m^2 \coloneqq \int_{\mathbb R_0^{d}}\mathbf z^{\otimes2}\,(\nu-\nu_m)(d\mathbf z) = \int_m^{+\infty}\int_\Omega \left(H(r,U(\omega))\right)^{\otimes2}\,\mathbb P(d\omega)\,dr,
\]
which is finite valued under Assumption \ref{sufficient conditions for limiting degeneracy} (c), and further denote by $\sigma_m$ the lower triangular matrix of the Cholesky decomposition of the matrix $\sigma_m^2$, so that $(\sigma_m)^{\otimes2} = \sigma_m^2$.
We scale the integrator in the $Q$ component by the matrix $\sigma_m^{-1}$ to obtain
\begin{equation}\label{def of Q tilde}
\widetilde Q_t(m;\mathcal{S}) \coloneqq \int_{\mathcal S}\int_{\mathbb R_0^{d}}f(t,s)\sigma_m^{-1}\mathbf z\,((\mu - \mu_m) - (\nu - \nu_m))(d\mathbf z,ds),
\end{equation}
for some $\mathcal{S}\subseteq \mathcal{T}$, of which convergence to a Gaussian process is our present interest.
Clearly, if two time domains, say, $\mathcal{S}_1$ and $\mathcal{S}_2$ are disjoint, then the associated L\'evy measures are disjoint, that is, the two resultant stochastic processes $\{\widetilde Q_t(m_1;\mathcal{S}_1):\,t\in [0,T]\}$ and $\{\widetilde Q_t(m_2;\mathcal{S}_2):\,t\in [0,T]\}$ are independent of each other, even irrespective of $m_1$ and $m_2$.
To ease the notation, we below use the notation $\mathbb{I}_d$ for the identity matrix in $\mathbb{R}^{d\times d}$ and $[c]_B:=c\mathbbm{1}_B(\|c\|)$ for $c\in \mathbb{R}^q$ and $B\in \mathcal{B}(0,+\infty)$ and let $A\preceq B$ indicate that the matrix $A-B$ is positive semidefinite for two square matrices $A$ and $B$ of a common order.

\begin{assumption}{\rm \label{sufficient conditions for weak convergence}
(a) The matrix $\sigma^2_m$ is positive definite for sufficiently large $m$;

\noindent (b) For every $\kappa>0$, 
		$\lim_{m\to +\infty}\int_m^{+\infty}\int_{\Omega}[ \|\sigma_m^{-1}H(r,U(\omega))\|^2]_{(\kappa,+\infty)} \mathbb{P}(d\omega)dr=0$;
		
\noindent (c) There exist $c_1>0$, $c_2>0$ and $\mathcal{S}\subseteq \mathcal{T}$ such that $\int_{\mathcal{S}}(f(t_2,s)-f(t_1,s))^{\otimes 2} ds \preceq c_2(t_2-t_1)^{c_1} \mathbb{I}_d$ for $0\le t_1\le t_2\le T$.
\qed}\end{assumption}

We remark that the time domain $\mathcal{S}$ in Assumption \ref{sufficient conditions for weak convergence} (c) corresponds to that in the definition \eqref{def of Q tilde}.
Indeed, we have specified the time domain $\mathcal{S}$ in \eqref{def of Q tilde} so as to address the case where independent processes $\{\widetilde Q_t(m_1;\mathcal{S}_k):\,t\in [0,T]\}_k$ on disjoint time domains $\{\mathcal{S}_k\}_k$ satisfy Assumption \ref{sufficient conditions for weak convergence} (c) with different degrees of continuity, that is, different degrees of H\"older continuity in Theorem \ref{theorem Gaussian approximation} (iii) below.
We will later discuss such an example in Section \ref{section nfsm} of two independent components with $\mathcal{T}=\mathbb{R}$, $\mathcal{S}_1=(-\infty,0]$ and $\mathcal{S}_2=(0,+\infty)$.

We are now ready to give the results on the Gaussian approximation.
On the one hand, since the regularity of the kernel (Assumption \ref{sufficient conditions for weak convergence} (c)) does not affect the convergence of finite dimensional distributions in any way, we can retain the entire time domain $\mathcal{T}$ in Theorem \ref{theorem Gaussian approximation} (i).
In Theorem \ref{theorem Gaussian approximation} (ii) and (iii), on the other hand, we address the weak convergence of sample paths, thus the time domain needs to be restricted to where Assumption \ref{sufficient conditions for weak convergence} (c) holds.
It is also worth mentioning that Assumption \ref{sufficient conditions for weak convergence} (a) and (b) are not only sufficient conditions but also necessary conditions for Theorem \ref{theorem Gaussian approximation} to hold true \cite{cohen2007gaussian}.
Note that $\mathcal{D}([0,T];\mathbb{R}^d)$ denotes the space of c\`adl\`ag functions from $[0,T]$ to $\mathbb{R}^d$ endowed with the Skorohod topology.

\begin{theorem}\label{theorem Gaussian approximation}
Let Assumption \ref{sufficient conditions for limiting degeneracy} hold.

\noindent (i) It holds under Assumption \ref{sufficient conditions for weak convergence} (a)-(b) that $\{\widetilde{Q}_t(m;\mathcal{T}):\,t\in [0,T]\}$ converges to $\{\int_{\mathcal{T}}f(t,s)dB_s:\,t\in [0,T]\}$ in the sense of finite dimensional distributions, as $m\to +\infty$.
	
\noindent (ii) It holds that under Assumption \ref{sufficient conditions for weak convergence} (a)-(c) that $\{\widetilde{Q}_t(m;\mathcal{S}):\,t\in [0,T]\}$ converges to $\{\int_{\mathcal{S}}f(t,s)dB_s:\,t\in [0,T]\}$ in $\mathcal{D}([0,T];\mathbb{R}^d)$, as $m\to +\infty$.
	
\noindent (iii) Assume there exists a continuous version of $\{\widetilde{Q}_t(m;\mathcal{S}):\,t\in [0,T]\}$ for $m\in\mathbb{N}$ and let Assumption \ref{sufficient conditions for weak convergence} (a)-(c) hold with the exponent $c_1$ in (c) strictly greater than $d$.
Then, the weak convergence in (ii) can be replaced with the weak convergence in $\mathcal{C}([0,T];\mathbb{R}^d)$ and the limiting process is almost surely locally H\"older continuous with exponent in $(0,(c_1-d)/2)$.
\end{theorem}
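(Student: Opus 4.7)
My plan is to handle the three claims sequentially, since each is an upgrade of the previous one, and the scaling identity $\sigma_m^{-1}\sigma_m^{2}\sigma_m^{-\top}=\mathbb{I}_d$ is the common engine that washes the $m$-dependence out of the covariance structure of $\widetilde{Q}(m;\cdot)$ before the limit is taken.

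For part (i), I would prove convergence of the characteristic function of arbitrary linear combinations. Fix $k\in\mathbb{N}$, $t_1,\dots,t_k\in[0,T]$, $\mathbf{y}_1,\dots,\mathbf{y}_k\in\mathbb{R}^d$ and set $a(s):=\sum_j f(t_j,s)^{\top}\mathbf{y}_j$. Using the L\'evy-It\^o representation of $\widetilde{Q}_t(m;\mathcal{T})$ together with the identification of $\nu-\nu_m$ with the image measure of Lebesgue on $(m,+\infty)$ under $H(\cdot,U)$ provided by \eqref{def nu_m}, the log characteristic function of $\sum_j\langle\mathbf{y}_j,\widetilde{Q}_{t_j}(m;\mathcal{T})\rangle$ equals
\[
\int_{\mathcal{T}}\!\int_m^{+\infty}\mathbb{E}\!\left[e^{i\langle a(s),\sigma_m^{-1}H(r,U)\rangle}-1-i\langle a(s),\sigma_m^{-1}H(r,U)\rangle\right]dr\,ds.
\]
The elementary inequality $|e^{ix}-1-ix+x^{2}/2|\le|x|^{2}\wedge|x|^{3}$, split at the threshold $\{\|\sigma_m^{-1}H(r,U)\|\le\kappa\}$, allows the Lindeberg-type Assumption \ref{sufficient conditions for weak convergence} (b) to kill the cubic/truncation remainder upon sending first $m\to+\infty$ and then $\kappa\to0$. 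The remaining quadratic term contracts, via $\sigma_m^{-1}\sigma_m^{2}\sigma_m^{-\top}=\mathbb{I}_d$, to $-\tfrac{1}{2}\sum_{j,\ell}\mathbf{y}_j^{\top}\bigl(\int_{\mathcal{T}}f(t_j,s)f(t_\ell,s)^{\top}ds\bigr)\mathbf{y}_\ell$, which is exactly the Gaussian log-characteristic exponent of $\sum_j\langle\mathbf{y}_j,\int_{\mathcal{T}}f(t_j,s)dB_s\rangle$.

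For part (ii), the same argument with $\mathcal{T}$ replaced by $\mathcal{S}$ delivers the finite dimensional convergence on $\mathcal{S}$, so the remaining task is tightness of $\{\widetilde{Q}(m;\mathcal{S})\}_{m\in\mathbb{N}}$ in $\mathcal{D}([0,T];\mathbb{R}^d)$. By the It\^o isometry and the scaling identity,
\[
\mathbb{E}\|\widetilde{Q}_{t_2}(m;\mathcal{S})-\widetilde{Q}_{t_1}(m;\mathcal{S})\|^{2}=\operatorname{tr}\!\int_{\mathcal{S}}(f(t_2,s)-f(t_1,s))^{\otimes 2}ds\le d\,c_2\,(t_2-t_1)^{c_1}
\]
uniformly in $m$, by Assumption \ref{sufficient conditions for weak convergence} (c). Combined with the independence of the increments of the compensated Poisson integral over disjoint temporal sub-intervals, a Cauchy-Schwarz split of the triple-time product then yields Billingsley's second-moment criterion for $\mathcal{D}$-tightness, and Prohorov's theorem lifts the finite dimensional convergence to weak convergence in $\mathcal{D}([0,T];\mathbb{R}^d)$.

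For part (iii), with continuous prelimits given by hypothesis and the $c_1$-regular covariance passing to the Gaussian limit, I would upgrade the moment estimate above to arbitrary order: either by direct L\'evy-Khintchine moment expansion of $\widetilde{Q}_{t_2}(m;\mathcal{S})-\widetilde{Q}_{t_1}(m;\mathcal{S})$ (uniform in $m$ thanks to the scaling identity), or by transferring Gaussian hypercontractivity bounds $\mathbb{E}\|G_{t_2}-G_{t_1}\|^{2p}\lesssim_{p,d}(t_2-t_1)^{pc_1}$ from the limit to the prelimits via uniform integrability. Kolmogorov-Chentsov applied at moment order $\alpha=2p$ for $p$ large then delivers both the tightness in $\mathcal{C}([0,T];\mathbb{R}^d)$ and a H\"older continuous modification of the limit of any order strictly less than $(c_1-d)/2$, the dimension $d$ entering through the $\mathbb{R}^d$-valued chaining bookkeeping required for the conclusion to hold for the vector norm and not merely componentwise. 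The main obstacle, in my view, is precisely this last step: the scaling identity hands us the second moment estimate uniformly in $m$ for free, but propagating it to higher moments with constants sharp enough to extract the dimension-dependent correction in the H\"older exponent, and verifying uniform integrability so as to transfer the bound from the Gaussian limit back to the prelimits, is the delicate bookkeeping. By contrast, the CLT in (i) is routine once Assumption \ref{sufficient conditions for weak convergence} (b) is recognized as the Lindeberg condition in the scaled formulation, and the $\mathcal{D}$-tightness in (ii) follows immediately from the compensated-Poisson isometry and the independence of disjoint-time increments.
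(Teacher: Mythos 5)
Your part (i) is essentially the paper's argument (characteristic functions of linear combinations, Lindeberg condition from Assumption \ref{sufficient conditions for weak convergence} (b) killing the remainder, the scaling identity producing the Gaussian exponent), and is fine. The problems are in (ii) and (iii).

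For (ii), your tightness argument rests on two claims that fail. First, the increments of $t\mapsto\widetilde{Q}_t(m;\mathcal{S})$ over disjoint time sub-intervals are \emph{not} independent for a general kernel: the $t$-dependence enters only through $f(t,s)$, while the integration variable $s$ ranges over all of $\mathcal{S}$ for every $t$, so $\widetilde{Q}_{t_2}-\widetilde{Q}_{t_1}$ and $\widetilde{Q}_{t_3}-\widetilde{Q}_{t_2}$ are built from the same Poisson random measure on the same region; independence holds only in the special case $f(t,s)=\mathbbm{1}_{[0,t]}(s)$. Second, even granting the second-moment bound $\mathbb{E}\|\widetilde{Q}_{t_2}-\widetilde{Q}_{t_1}\|^2\le d\,c_2(t_2-t_1)^{c_1}$, a Cauchy--Schwarz split gives a triple-time product bounded by a constant times $(t_3-t_1)^{c_1}$, whereas Billingsley's criterion needs exponent $1+\beta$ with $\beta>0$, i.e.\ $c_1>1$. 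Assumption \ref{sufficient conditions for weak convergence} (c) only guarantees $c_1>0$, and the paper's central examples (L\'evy processes, Ornstein--Uhlenbeck, CARMA) all have $c_1=1$, so your route excludes precisely the cases the theorem is meant to cover. The paper instead verifies the Aldous criterion (\cite[Theorem 16.11]{kallenberg2002foundations}): for stopping times $\tau_m$ and $h_m\downarrow0$ it shows $\widetilde{Q}_{\tau_m+h_m}(m;\mathcal{S})-\widetilde{Q}_{\tau_m}(m;\mathcal{S})\to0$ in probability via the same isometry bound $c_2h_m^{c_1}d\to0$, which works for any $c_1>0$ and needs no independence of increments.

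For (iii), the higher-moment upgrade is not available. The hypotheses control only second moments of $\sigma_m^{-1}\mathbf{z}$ under $\nu-\nu_m$ (via the scaling identity and the Lindeberg condition); the fourth and higher moments $\int\|\sigma_m^{-1}\mathbf{z}\|^{2p}(\nu-\nu_m)(d\mathbf{z})$ that appear in the moment expansion of a compensated Poisson integral are not bounded uniformly in $m$ by anything assumed. The alternative you offer --- transferring Gaussian hypercontractivity bounds from the limit back to the prelimits via uniform integrability --- runs in the wrong direction: weak convergence plus moment bounds on the \emph{limit} gives no moment control on the approximating sequence, and uniform integrability of $\|\widetilde{Q}_{t_2}(m)-\widetilde{Q}_{t_1}(m)\|^{2p}$ is exactly what you would have to prove, not what you may invoke. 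The paper avoids higher moments entirely: under the extra hypothesis $c_1>d\ (\ge1)$, the \emph{second}-moment bound with exponent $c_1>1$, uniform in $m$, already satisfies the Kolmogorov--Chentsov-type tightness criterion in $\mathcal{C}([0,T];\mathbb{R}^d)$ (\cite[Corollary 16.9]{kallenberg2002foundations}) and yields the H\"older exponent $(c_1-d)/2$ directly; it also disposes separately of the deterministic residual coming from the compensator mismatch on $\{\|\mathbf{z}\|>1\}$, a term your proposal does not address.
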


Note that Assumption \ref{sufficient conditions for weak convergence} (c) is the only regularity condition on the kernel throughout.
Despite sample paths of an infinitely divisible process without Gaussian components cannot be smoother than its kernel \cite{rosinski1989path}, the smoothness of the kernel is rather naturally irrelevant for the Gaussian approximation 
in the sense of finite dimensional distributions (Theorem \ref{theorem Gaussian approximation} (i)).

Looking closely at Assumptions \ref{sufficient conditions for limiting degeneracy} and \ref{sufficient conditions for weak convergence}, the analysis of the error terms depends on the integrator and the kernel, while interestingly it suffices to investigate those two factors separately, which we do in Sections \ref{section Levy measures} and \ref{section examples of ID processes}, respectively.

\section{L\'evy measures}\label{section Levy measures}

In this section, we illustrate three typical integrators against Assumption \ref{sufficient conditions for limiting degeneracy} (c) and Assumption \ref{sufficient conditions for weak convergence} (a) and (b).
To avoid overloading the paper, we omit nonessential details in some instances by referring to relevant work in the literature.

\subsection{Gamma law} \label{example gamma law}
We start with the one-dimensional L\'evy measure $\nu(dz)=ae^{-\beta z}/zdz$ on $(0,+\infty)$
with $a>0$ and $\beta>0$, corresponding to the gamma law.
The preferred shot noise representation for numerical purposes is the one by Bondesson's method, based on the decomposition \eqref{generalized shot noise decomposition}
with $H(r,u)=\beta^{-1}e^{-r/a}u$ where $U$ is the standard exponential random variable.
Then, we have
\[
 \sigma_m^2=\int_m^{+\infty}\int_{\Omega}\left(H(r,U(\omega))\right)^2\mathbb{P}(d\omega)dr=\int_m^{+\infty}\int_{\Omega}\left(\frac{1}{\beta}e^{-r/a}U(\omega)\right)^2\mathbb{P}(d\omega)dr=\frac{a}{\beta^2}e^{-2m/a},\quad m\in\mathbb{N},
\]
which justifies Assumption \ref{sufficient conditions for limiting degeneracy} (c) and Assumption \ref{sufficient conditions for weak convergence} (a).
Assumption \ref{sufficient conditions for weak convergence} (b) is however violated, since for every $\kappa>0$, 
\[
 \int_m^{+\infty}\int_{\Omega}\left[\left|\sigma_m^{-1}H(r,U(\omega))\right|^2\right]_{(\kappa,+\infty)}\mathbb{P}(d\omega)dr
 =\int_{0+}^{+\infty}\int_{\Omega}\left[\frac{1}{a}e^{-2r/a}(U(\omega))^2\right]_{(\kappa,+\infty)}\mathbb{P}(d\omega)dr>0,
\]
which no longer depends on the index $m$.
The Gaussian approximation 
(Theorem \ref{theorem Gaussian approximation}) fails, which supports, as mentioned earlier, that Assumption \ref{sufficient conditions for weak convergence} (b) is not only sufficient but also necessary.
We refer the reader to \cite[Section 5.4]{kawai2017sample} for many other shot noise representations of the gamma law 
as well as error analysis.

\subsection{Stable law} \label{example stable law}
Consider the L\'evy measure of a stable law
\begin{equation}\label{stable Levy measure}
	\nu(B) = \int_{S^{d-1}}\int_{0+}^{+\infty}\mathbbm1_B(r\bm\xi)\frac{\alpha}{r^{\alpha+1}}q(r,{\bm \xi})dr\, \lambda (d\bm\xi),\quad B\in\mathscr B(\mathbb R^d_0),
\end{equation}
with $\alpha\in(0,2)$, a finite measure $\lambda$ on the unit sphere $S^{d-1}$ and $q\equiv 1$.
We have $H(r,{\bf u})=(r/\|\lambda\|)^{-1/\alpha}{\bf u}$, where $\|\lambda\|:=\lambda(S^{d-1})$ and $U$ is a random vector in $S^{d-1}$ with the distribution $\lambda/\|\lambda\|$. 
Interestingly, the inverse L\'evy measure, rejection and Bondesson's methods yield this shot noise representation.
While the thinning method may deduce a different shot noise representation, it is significantly elapsed by the representation above in terms of elegance and practicality.

Now, Assumption \ref{sufficient conditions for limiting degeneracy} (c) holds true, due to 
\begin{equation}\label{verification stable law}
 \sigma_m^2=\int_m^{+\infty}\int_{\Omega}(H(r,U(\omega)))^{\otimes 2}\mathbb{P}(d\omega)dr=m^{1-2/\alpha} \frac{\|\lambda\|^{2/\alpha-1}}{2/\alpha-1}\Lambda \to 0,
\end{equation}
as $m\to +\infty$, with $\Lambda:=\int_{S^{d-1}}{\bm \xi}^{\otimes 2}\lambda(d{\bm \xi})$.
If the measure $\lambda$ is not concentrated on a proper linear subspace of $\mathbb{R}^d$, Assumption \ref{sufficient conditions for weak convergence} (a) is satisfied, since then the matrix $\Lambda$ is positive definite.
In addition, Assumption \ref{sufficient conditions for weak convergence} (b) holds true, since for every $\kappa>0$, 
\[
 \int_m^{+\infty}\int_{\Omega}\left[\left\|\sigma_m^{-1}H(r,U(\omega))\right\|^2\right]_{(\kappa,+\infty)}\mathbb{P}(d\omega)dr
 =\int_1^{+\infty}\int_{\Omega}m\left[\frac{(2/\alpha-1)r^{-2/\alpha}\|\lambda\|}{m}\langle {\bm \xi},\Lambda^{-1}{\bm \xi}\rangle \right]_{(\kappa,+\infty)}\frac{\lambda(d{\bm \xi})}{\|\lambda\|}dr\to 0,
\]
as $m\to +\infty$, where the term $[\cdot]_{(\kappa,+\infty)}$ eventually vanishes for almost every $(r,{\bm \lambda})\in (1,+\infty)\times S^{d-1}$.

In the literature, there exist a few variants of the stable law and their error analysis that can proceed in a similar manner.
We refer the reader to, for instance, \cite{houdre2007layered, leguevel2012ferguson} for layered stable and multistable L\'evy processes.

\subsection{Tempered stable law}
Consider again the L\'evy measure \eqref{stable Levy measure}, where $q(\cdot,{\bm \xi})$ here is completely monotone with $q(0+,{\bm \xi})=1$ and $\lim_{r\to +\infty}q(r,{\bm \xi})=0$ for ${\bm \xi}\in S^{d-1}$.
Then, it is the L\'evy measure of a (proper) tempered stable law \cite{rosinski2007tempering}. 
The most well known representation is the one developed in \cite{rosinski2007tempering}, which can be described as 
\[
 H(r,{\bf u})=\left[\left(\frac{r}{\|\lambda\|}\right)^{-1/\alpha}\land \frac{u_1 u_2^{1/\alpha}}{\|u_3\|}\right]\frac{u_3}{\|u_3\|},
\]
where ${\bf u}=(u_1,u_2,u_3)$ corresponds to a random vector taking values in $(0,+\infty)\times [0,1]\times \mathbb{R}^d_0$ with the law $e^{-u_1}du_1 \otimes du_2 \otimes (Q/\|\lambda\|)$, such that $Q(B):=\int_{S^{d-1}}\int_{0+}^{+\infty}\mathbbm{1}_B(r{\bm \xi})Q(dr;{\bm \xi})\lambda(d{\bm \xi})$ and $q(r,{\bm \xi})=\int_{0+}^{+\infty}e^{-rs}Q(ds;{\bm \xi})$.

Now, it is rather straightforward to verify Assumption \ref{sufficient conditions for limiting degeneracy} (c) and Assumption \ref{sufficient conditions for weak convergence} (a) with the aid of \eqref{verification stable law} for the tempered stable law, whereas, in more general terms, it seems very difficult to go through all the conditions, particularly in higher dimensions.
We refer the reader to \cite[Theorems 2.4 and 2.5]{cohen2007gaussian} for sufficient conditions to ensure Assumption \ref{sufficient conditions for limiting degeneracy} (c) and Assumption \ref{sufficient conditions for weak convergence} (a) and (b) for the L\'evy measure in polar coordinates, just like \eqref{stable Levy measure}, by which the tempered stable law is proved to satisfy the conditions for the Gaussian approximation, as long as the matrix $\Lambda$ is positive definite.
In our notation, if the L\'evy measure $\nu-\nu_m$ of the discarded jumps can be decomposed in the polar form $(\nu-\nu_m)(d{\bf z}) = h_m(dr;\bm\xi)\lambda(d\bm\xi)$ for $(r,\bm\xi)\in(0,+\infty)\times S^{d-1}$, then those sufficient conditions read as follows:
There exists a sequence $\{b_m\}_{m\in\mathbb N}$ in $(0,+\infty)$ such that $\liminf_{m\to+\infty}b_m^{-1} \int_{0+}^{+\infty} r^2 h_m(dr;{\bm \xi})> 0$ for almost every ${\bm \xi}\in S^{d-1}$, as well as such that for every $\kappa>0$, $\lim_{m\to+\infty}b_m^{-2}\int_{\|\mathbf z\|>\kappa b_m} \|\mathbf z\|^2\,(\nu-\nu_m)(d\mathbf z) = 0$ (in fact, without requiring the polar form for the latter).

For many other shot noise representations and error analysis of the tempered stable law, 
we refer the reader to \cite{imai2011finite}.

\section{Kernels}\label{section examples of ID processes}

In what follows, we illustrate a variety of kernels.
The first three examples have $\mathcal T=[0,T]$, so the term $\{R_t(m,n):\,t\in [0,T]\}$ is simply irrelevant.
In the next four examples, since the support of their time integrands are only bounded from above by $T$, we decompose the stochastic integral processes with, for instance, $\mathcal T_n = (-n,T)$.
In the last one, since the support is unbounded $\mathcal{T}=\mathbb{R}$, the time truncation needs to be performed from both sides, for instance, $\mathcal{T}_n=(-n,+n)$.
Moreover, in Section \ref{section nfsm}, we illustrate the relevance of further splitting the $Q$ term into multiple independent components (with disjoint time sets in the sense of \eqref{def of Q tilde}), resulting in different degrees of the limiting H\"older continuity (Theorem \ref{theorem Gaussian approximation} (iii)).

\subsection{L\'evy processes}\label{section Levy process}

We first consider the simple case of a L\'evy process $\{X_t:\,t\in [0,T]\}$, which is an infinitely divisible process with the trivial stochastic integral representation $X_t = \int_0^T\mathbbm1_{[0,t]}(s)\,dL_s$ for $t\in[0,T]$.
As the stochastic integral only integrates over, at most, $[0,T]$, the error term $\{R_t(m,n)\}$ is irrelevant here, that is, the only error component of the approximation is $\{Q_t(m):t\in[0,T]\}$ of the discarded jumps.
Clearly, the kernel $\mathbbm1_{[0,t]}(s)$ is uniformly bounded and square-integrable on $[0,T]$ (Assumptions \ref{sufficient conditions for limiting degeneracy} (a) and (b)).
Moreover, Theorem \ref{theorem Gaussian approximation} (ii) holds as Assumption \ref{sufficient conditions for weak convergence} (c) is satisfied with $c_1=1$.
In fact, the Gaussian approximation for the discarded jumps of L\'evy processes is well known in the literature \cite{cohen2007gaussian}.
We remark that in the case of L\'evy processes, the weak convergence in $\mathcal{D}([0,T];\mathbb{R}^d)$ is equivalent to the weak convergence of a marginal law \cite[Exercise 16.10]{kallenberg2002foundations}.

\subsection{L\'evy-driven Ornstein--Uhlenbeck processes}\label{section Levy-driven OU process}

The L\'evy-driven Ornstein--Uhlenbeck process $\{X_t:t\in[0,T]\}$ is described by the stochastic differential equation $dX_t = \lambda(\mu-X_t)\,dt + dL_{t}$, with $\lambda>0$ and $\mu\in\mathbb R$, where its explicit solution is available as follows:
\begin{equation}\label{OU explicit solution}
	X_t = e^{-\lambda t}X_0 + \mu\left(1-e^{-\lambda t}\right) + \int_0^t e^{-\lambda(t-s)}\,dL_{s}.
\end{equation}
In light of the representation \eqref{OU explicit solution}, it is an infinitely divisible process with the kernel $f(t,s)=e^{-\lambda(t-s)}\mathbbm1_{[0,t)}(s)$, that is,
it only integrates over a compact time interval.
Hence, we have $\mathcal{T}_n\equiv \mathcal{T}$ and the sole error component is $\{Q_t(m):t\in[0,T]\}$, corresponding to the discarded jumps. 
The kernel is clearly square-integrable (Assumption \ref{sufficient conditions for limiting degeneracy} (b)) and is uniformly bounded (Assumption \ref{sufficient conditions for limiting degeneracy} (a)).
Moreover, it satisfies Assumption \ref{sufficient conditions for weak convergence} (c) with $c_1=1$ for Theorem \ref{theorem Gaussian approximation} (ii), as
\[
 \int_0^T (f(t_2,s)-f(t_1,s))^2\,ds = \frac{1}{2\lambda}\left[2\left(1-e^{-\lambda(t_2-t_1)}\right) + e^{-2\lambda t_1}\left(1 - e^{-\lambda(t_2 - t_1)}\right)^2\right] \le \frac32(t_2 - t_1),\quad 0\le t_1\le t_2\le T,
\]
where the inequality holds as $(1 - e^{-x})^2 \le 1 - e^{-x} \le x$ for all $x\ge0$.

\subsection{A fractional L\'evy motion}\label{section ftsm}

Consider the following kernel defined on a bounded time interval:
\[
	K_{H,\alpha}(t,s)\coloneqq c_{H,\alpha}\left[\left(\frac{t}{s}\right)^{H-1/\alpha}(t-s)^{H-1/\alpha} - \left(H-\frac{1}{\alpha}\right)s^{1/\alpha - H}\int_s^t u^{H - 1/\alpha - 1}(u-s)^{H-1/\alpha}\,du\right]\mathbbm1_{[0,t)}(s),
\]
where $\alpha\in(0,2)$, $H\in (1/\alpha-1/2,1/\alpha+1/2)$ and $c_{H,\alpha}$ is a suitable constant. 
We highlight that when $H\in(1/\alpha-1/2,1/\alpha)$, it holds that $\lim_{s\to t-}|K_{H,\alpha}(t,s)| = +\infty$, thus leading to failure of Assumption \ref{sufficient conditions for limiting degeneracy} (a) and thus sample unboundedness.
Hence, it suffices to focus on $H\in (1/\alpha,1/\alpha + 1/2)$.
Then, there exists a continuous modification of the infinitely divisible process which is almost surely locally H\"older continuous with exponent $\gamma < H-1/\alpha$.
Interesting features of this kernel are its self-similarity $K_{H,\alpha}(ht,s) = h^{H-1/\alpha}K_{H,\alpha}(t,s/h)$ for all $h>0$, as well as the second order one \cite[Lemmas 2.1 and 2.3]{houdre2006fractional}:
	\[
		\int_0^T\left(K_{H,\alpha}(t_2,s) - K_{H,\alpha}(t_1,s)\right)^2\,ds = c_{H,\alpha}(t_2-t_1)^{2H -2/\alpha+1},\quad 0\le t_1\le t_2\le T,
	\]
which verifies Assumption \ref{sufficient conditions for limiting degeneracy} (b) and Assumption \ref{sufficient conditions for weak convergence} (c) with $c_1=2H -2/\alpha+1$.
Thus, by Theorem \ref{theorem Gaussian approximation} (iii), the limiting Gaussian process of the scaled term $\{\widetilde{Q}_t(m):\,t\in [0,T]\}$ is almost surely locally H\"older continuous with exponent in $(0,H-1/\alpha)$.
We refer the reader to \cite{houdre2006fractional} for typical sample paths with various parameter sets generated by shot noise representations.

\subsection{Linear fractional L\'evy motions}\label{section nfsm}

Let $n\in \mathbb{N}$ and $\alpha\in(0,2)$.
The $n$-th order moving average kernel with Hurst parameter $H\in(n-1,n)\backslash\{1/\alpha\}$, such that $H-1/\alpha$ is not an integer, is given by
\[
f_n(t,s;H,\alpha)\coloneqq\frac{1}{\Gamma(H-1/\alpha + 1)}\left((t-s)_+^{H-1/\alpha} - \sum_{k=0}^{n-1}\binom{H-1/\alpha}{k}t^k(-s)_+^{H-1/\alpha-k}\right),
\quad s\in \mathbb{R}.
\]
If $H=1/\alpha$, then the kernel reduces to $\mathbbm{1}_{[0,t)}(s)$ by the zero-power convention, corresponding to L\'evy processes (Section \ref{section Levy process}).
If $n=1$, then it is the kernel for typical linear fractional L\'evy motions, whereas it is a higher order one \cite{kawai2016higher} with $n\in \{2,\cdots\}$.
Much like the kernel of Section \ref{section ftsm}, this moving average kernel captures self-similarity, with strong self-similarity with index $H$ if the integrator is self-similar with index $\alpha$, and second-order self-similarity in the case where the integrator is not necessarily self-similar but has finite second-order moments (see, for instance, \cite{carnaffan2019analytic}).

It is easy to show that Assumption \ref{sufficient conditions for limiting degeneracy} (a) is satisfied only when $H-1/\alpha > n-1$, while Assumption \ref{sufficient conditions for limiting degeneracy} (b) is satisfied only when $H-1/\alpha\in (n-3/2,n-1/2)$.
Hence, it suffices to focus on $H-1/\alpha \in (n-1,n-1/2)$ for our simulation purpose.
If $n=1$, on the one hand, then it holds that for $0\le t_1\le t_2\le T$,
\begin{equation}\label{lips 5.4}
 \int_{\mathbb{R}}\left(f_1(t_2,s;H,\alpha)-f_1(t_1,s;H,\alpha)\right)^2ds
 =\frac{(t_2-t_1)^{2H-2/\alpha+1}}{(\Gamma(H-1/\alpha + 1))^2}\int_{\mathbb{R}}\left((1-s)_+^{H-1/\alpha}-(-s)_+^{H-1/\alpha}\right)^2ds,
\end{equation}
which is finite valued since $H-1/\alpha \in (0,+1/2)$ when $n=1$.

On the other hand, if $n\in \{2,\cdots\}$, then the kernel is no longer as uniform as \eqref{lips 5.4} over the entire domain $\mathcal{T}$.
We split $\mathcal{T}=\mathbb{R}$ into the two disjoint and exhaustive sets $\mathcal{S}_1=(-\infty,0]$ and $\mathcal{S}_2=(0,+\infty)$ so that 
\[
\widetilde{Q}_t(m;\mathcal{S}_k) =\left(\int_{-\infty}^0+\int_0^{+\infty}\right) \int_{\mathbb R_0}f_n(t,s;H,\alpha)\sigma_m^{-1}\mathbf z\,((\mu - \mu_m) - (\nu - \nu_m)\mathbbm1_{(0,1]}(\|\mathbf z\|))(d\mathbf z,ds),
\]
where the two integrals $\int_{-\infty}^0$ and $\int_0^{+\infty}$ correspond to $\mathcal{S}_1$ and $\mathcal{S}_2$, respectively.
Clearly, the resulting two stochastic processes are independent of each other and thus can be treated separately.
First, on the negative time line $(-\infty,0](=\mathcal{S}_1\subset \mathcal{T})$, we employ the mean value theorem with the aid of the recurrence formula $\partial_t f_n(t,s;H,\alpha)=f_{n-1}(t,s;H-1,\alpha)$ for $t>s$, so as to yield that for $0\le t_1\le t_2\le T$,
\[
  \int_{-\infty}^0 \left(f_n(t_2,s;H,\alpha)-f_n(t_1,s;H,\alpha)\right)^2ds=
  (t_2-t_1)^2 \int_{-\infty}^0\left(f_{n-1}(\Theta_s(t_1,t_2),s;H-1,\alpha)\right)^2ds\le (t_2-t_1)^2 \int_{\mathbb{R}}\left(f_{n-1}(t_2,s;H-1,\alpha)\right)^2ds,
\]
where $\Theta_s(t_1,t_2)$ is a real number in the interval $(t_1,t_2)$ depending on $s$.
The inequality holds since $f_{n-1}(\cdot,s;H-1,\alpha)$ is positive and increasing on $[0,T]$ as well as the integral is finite valued, both due to $H-1/\alpha \in (n-1,n-1/2)$ and $n\in \{2,\cdots\}$.
Therefore, the first component (corresponding to $\mathcal{S}_1\subset \mathcal{T}$) satisfies Assumption \ref{sufficient conditions for weak convergence} (c) with $c_1=2$, that is, Theorem \ref{theorem Gaussian approximation} (ii) with H\"older exponent in $(0,1/2)$.
In turn, since the terms $(-s)_+^{H-1/\alpha-k}$ all vanish on the positive time line $(0,+\infty)(=\mathcal{S}_2\subset \mathcal{T})$, it holds that for $0\le t_1\le t_2\le T$,
\begin{align*}
  \int_0^{+\infty} \left(f_n(t_2,s;H,\alpha)-f_n(t_1,s;H,\alpha)\right)^2ds
 &\le \int_{\mathbb{R}} \frac{((t_2-s)_+^{H-1/\alpha}-(t_1-s)_+^{H-1/\alpha})^2}{(\Gamma(H-1/\alpha + 1))^2}ds\\
 &=\frac{(t_2-t_1)^{2H-2/\alpha+1}}{(\Gamma(H-1/\alpha + 1))^2}\int_{\mathbb{R}}\left((1-s)_+^{H-1/\alpha}-(-s)_+^{H-1/\alpha}\right)^2ds,
\end{align*}
again due to $H-1/\alpha \in (n-1,n-1/2)$ and $n\in \{2,\cdots\}$.
Therefore, the second component (corresponding to $\mathcal{S}_2\subset \mathcal{T}$) satisfies Assumption \ref{sufficient conditions for weak convergence} (c) with $c_1=2H-2/\alpha+1$, that is, Theorem \ref{theorem Gaussian approximation} (ii) with H\"older exponent in $(0,H-1/\alpha)$.

For typical sample paths based on the proposed simulation method and error analysis, we refer the reader to \cite{kawai2016higher} and \cite{carnaffan2019analytic} when the integrator is, respectively, stable and tempered stable.

\subsection{L\'evy-driven CARMA processes}\label{section CARMA process}

We now turn to 
L\'evy-driven CARMA processes \cite{kawai2017sample}.
To define this class, fix $a_1,\cdots,a_p,b_0,\cdots,b_{p-1}\in\mathbb R$ such that $b_q = 1$, $q\le p-1$ and $b_k = 0$ for $k>q$, and define the polynomials $a(z) \coloneqq z^p + a_1z^{p-1} + \cdots + a_p$ and $b(z)\coloneqq b_0 + b_1 z + \cdots + b_qz^q$ in such a way that $a(z)$ and $b(z)$ have no common roots. Define $A\in\mathbb R^{p\times p}$ by
\[
A \coloneqq \begin{bmatrix}
0 & 1 & 0 & \cdots & 0\\
0 & 0 & 1 & \cdots & 0\\
\vdots & \vdots & \vdots & \cdots & \vdots\\
0 & 0 & 0 & \cdots & 1\\
-a_p & -a_{p-1} & -a_{p-2} & \cdots & -a_1
\end{bmatrix},
\]
and denote the eigenvalues of $A$ as $\lambda_1,\cdots,\lambda_p\in\mathbb R$, that is, $a(z) = \prod_{k=1}^p(z-\lambda_k)$.
We assume that the roots of $a(z)$ are all real, strictly negative and distinct.
Define $e_p\in\mathbb R^p$ as the unit vector in the $p$-th direction and $b\coloneqq[b_1, b_1,\cdots,b_{p-1}]^\intercal\in\mathbb R^p$.
Then, a L\'evy-driven CARMA process in $\mathbb R$ of order $(p,q)$ with $p>q$ is defined as $\{Y_t:t\in\mathbb R\}$, where $Y_t\coloneqq\langle b,X_t\rangle$ and $\{X_t:t\in\mathbb R\}$ is a stochastic process in $\mathbb R^p$ satisfying
\[
X_{t_2} = e^{A(t_2-t_1)}X_{t_1} + \int_{t_1}^{t_2} e^{A(t_2 - s)}e_p\,dL_s,\quad t_1\le t_2.
\]
Under suitable technical conditions, the L\'evy-driven CARMA process $\{Y_t:t\in\mathbb R\}$ can be expressed as a linear combination of dependent Ornstein-Uhlenbeck-like processes as follows:
\[
Y_t = \sum_{k=1}^p\frac{b(\lambda_k)}{a'(\lambda_k)}\int_{-\infty}^t e^{\lambda_k(t-s)}\,dL_s,\quad t\in [0,T].
\]
Hence, the kernel here is given by
\[
 f(t,s)=\sum_{k=1}^p\frac{b(\lambda_k)}{a'(\lambda_k)} e^{\lambda_k(t-s)}\mathbbm{1}_{(-\infty,t)}(s),\quad s\in\mathbb{R},
\]
that is, a linear combination of bounded exponential functions.
Clearly, the kernel satisfies boundedness and square-integrability (Assumption \ref{sufficient conditions for limiting degeneracy} (a) and (b)), as well as the Lipschitz continuity (Assumption \ref{sufficient conditions for weak convergence} (c)), since
\[
 \int_{-\infty}^T \left(e^{\lambda (t_2-s)}-e^{\lambda (t_1-s)}\right)^2\,ds = \frac{1-e^{\lambda (t_2-t_1)}}{-\lambda} \le t_2 - t_1,\quad 0\le t_1\le t_2\le T,
\]
for $\lambda<0$.
Hence, the kernel satisfies all the relevant conditions for the limiting degeneracy (Proposition \ref{sufficient conditions for limiting degeneracy}) and the Gaussian approximation (Theorem \ref{theorem Gaussian approximation} (ii) with $\mathcal{S}=\mathcal{T}$).
We refer the reader to \cite{kawai2017sample} for the presentation of typical sample paths based on the proposed simulation method and error analysis when the integrator is stable and gamma.

\subsection{L\'evy-driven reverse Ornstein-Uhlenbeck processes}\label{section reverse OU}

The L\'evy-driven reverse Ornstein-Uhlenbeck process is defined as the stochastic integral \eqref{Levy-driven stochastic integral} with the kernel $f(t,s) = e^{-\lambda(s-t)}\mathbbm1_{[t,+\infty)}(s)$, with $\lambda>0$.
Therefore, the kernel satisfies boundedness and square-integrability (Assumption \ref{sufficient conditions for limiting degeneracy} (a) and (b)), as well as the Lipschitz continuity (Assumption \ref{sufficient conditions for weak convergence} (c)):
\[
 \int_{\mathbb R}(f(t_2,s)-f(t_1,s))^2\,ds 
	= \frac1\lambda \left(1-e^{-\lambda(t_2-t_1)}\right)\le t_2-t_1,\quad 0\le t_1\le t_2\le T,
\]
satisfying the conditions for Proposition \ref{sufficient conditions for limiting degeneracy} and Theorem \ref{theorem Gaussian approximation} (ii) with $\mathcal{S}=\mathcal{T}$.

\subsection{Log-fractional L\'evy motion}\label{section log fractional}

The log-fractional L\'evy motion is defined as the stochastic integral \eqref{Levy-driven stochastic integral} with the kernel $f(t,s) = \ln |t-s| - \ln|s|$ for $s\in \mathbb{R}$.
Despite that the kernel is square-integrable (Assumption \ref{sufficient conditions for limiting degeneracy} (b)), the log-fractional motion is known to be unbounded on every interval of positive length \cite[Example 10.2.6]{samorodnitsky1994stable}.
Indeed, Assumption \ref{sufficient conditions for limiting degeneracy} (a) fails.
 

\subsection{Real harmonizable fractional motions}\label{section real harmonizable}

The real harmonizable fractional motion \cite{benassi2002, samorodnitsky1994stable} is a real-valued random field with locally H\"older continuous sample paths and can generalize fractional Gaussian fields.
It is within our scope if the time index is one dimensional with $\mathcal{T}=\mathbb{R}$ and, for instance, $\mathcal{T}_n=(-n,+n)$.
On a compact time interval $[0,T]$, the real harmonizable fractional L\'evy motion is formulated as
\[
 X_t=\int_{\mathbb{R}}\frac{e^{-its}-1}{|s|^{1/\alpha+H}}M(ds),\quad t\in [0,T],
\]
where $H\in (0,1)$, $\alpha\in (0,2]$ and $M$ is a suitable random L\'evy measure on $\mathbb{R}$.
On the one hand, the kernel is square-integrable (Assumption \ref{sufficient conditions for limiting degeneracy} (b)) and satisfies the regularity condition (Assumption \ref{sufficient conditions for weak convergence} (c) with $\mathcal{S}=\mathcal{T}$ and $c_1=2H+2/\alpha-1$), due to
\begin{align*}
 \int_{\mathbb{R}}\left|\frac{e^{-it_1s}-1}{|s|^{1/\alpha+H}}-\frac{e^{-it_2s}-1}{|s|^{1/\alpha+H}}\right|^2ds
 = (t_2-t_1)^{2H+2/\alpha-1}\int_{\mathbb{R}}\frac{2(1-\cos(s))}{|s|^{2/\alpha+2H}}ds,\quad 0\le t_1\le t_2\le T,
\end{align*}
if and only if $H+1/\alpha\in (1/2,3/2)$.
On the other hand, Assumption \ref{sufficient conditions for limiting degeneracy} (a) fails when $H+1/\alpha >1$, since then $\sup_{s\in \mathbb{R}}|f(t,s)|=+\infty$ for all $t\in [0,T$].
Hence, for our purposes, it suffices to focus on $H+1/\alpha \in (1,3/2)$. 
Since $c_1=2H+2/\alpha-1\in (1,2)$, Theorem \ref{theorem Gaussian approximation} (ii) holds, provided that the random L\'evy measure satisfies all the relevant conditions in Assumptions \ref{sufficient conditions for limiting degeneracy} and \ref{theorem Gaussian approximation}. 
We refer the reader to \cite{dejean2005fracsim, lacaux2004series, samorodnitsky1994stable} for other simulation methods, error analysis and typical sample paths.

\subsection{Pareto tails of an error term with the stable integrator}\label{subsection R}

We close this study by illustrating a possible issue when the integrator is a stable process.
We have already seen in Example \ref{example stable law} that the stable L\'evy measure \eqref{stable Levy measure} (with $q\equiv 1$) satisfies the conditions for the limiting degeneracy (Assumption \ref{sufficient conditions for limiting degeneracy} (c)), and moreover for the Gaussian approximation (Assumption \ref{sufficient conditions for weak convergence} (a) and (b)), that is, the $Q$ term is asymptotically as quiet as a Gaussian process, while the $R$ term, if exists at all, vanishes in the limit.
A possible issue here is, however, that the $R$ term exhibits non-negligible Pareto tails (for instance, \cite[Theorem 10.5.1]{samorodnitsky1994stable}) in the sense of 
\[
 \lim_{\theta\to+\infty}\theta^\alpha\mathbb P\left(\sup_{t\in[0,T]}\left|\lim_{m\to +\infty}R_t(m,n)\right| > \theta\right) = c\int_{\mathcal{T}\setminus \mathcal{T}_n}\sup_{t\in[0,T]}|f(t,s)|^\alpha\,ds,\quad n\in \mathbb{N},
\]
where $c$ is a suitable positive constant, independent of $n$.
This is the case whenever the integral of the righthand side is finite valued, including the stable CARMA process \cite[Theorem 4.1]{kawai2017sample} and the higher order fractional stable motion \cite[Theorem 6.4]{kawai2016higher}.
It is then desirable to take the time truncation $\mathcal{T}_n$ sufficiently large to suppress the tails, taking into account a typical tradeoff with additional computing cost for more summands in light of \eqref{principal truncation}.

\small
	\bibliographystyle{abbrv}
	\bibliography{master}

\begin{thebibliography}{10}

\bibitem{asmussen2001approximations}
S.~Asmussen and J.~Rosi{\'n}ski.
\newblock {Approximations of small jumps of L{\'e}vy processes with a view
  towards simulation}.
\newblock {\em Journal of Applied Probability}, 38(2):482--493, 2001.

\bibitem{basse2013uniform}
A.~Basse-O'Connor and J.~Rosi{\'n}ski.
\newblock {On the uniform convergence of random series in Skorohod space and
  representations of c{\`a}dl{\`a}g infinitely divisible processes}.
\newblock {\em The Annals of Probability}, 41(6):4317--4341, 2013.

\bibitem{benassi2002}
A.~Benassi, S.~Cohen, and J.~Istas.
\newblock Identification and properties of real harmonizable fractional
  {L}{\'e}vy motions.
\newblock {\em Bernoulli}, 8(2):97--115, 02 2002.

\bibitem{carnaffan2019analytic}
S.~Carnaffan and R.~Kawai.
\newblock Analytic model for transient anomalous diffusion with highly
  persistent correlations.
\newblock {\em Physics Review E}, 99(6):062120, 2019.

\bibitem{cohen2007gaussian}
S.~Cohen and J.~Rosi{\'n}ski.
\newblock {Gaussian approximation of multivariate L{\'e}vy processes with
  applications to simulation of tempered stable processes}.
\newblock {\em Bernoulli}, 13(1):195--210, 2007.

\bibitem{dejean2005fracsim}
S.~D{\'e}jean and S.~Cohen.
\newblock {FracSim: An R package to simulate multifractional L{\'e}vy motions}.
\newblock {\em Journal of Statistical Software}, 14(1):1--19, 2005.

\bibitem{ferguson1972representation}
T.~S. Ferguson and M.~J. Klass.
\newblock {A representation of independent increment processes without Gaussian
  components}.
\newblock {\em The Annals of Mathematical Statistics}, 43(5):1634--1643, 1972.

\bibitem{houdre2006fractional}
C.~Houdr{\'e} and R.~Kawai.
\newblock On fractional tempered stable motion.
\newblock {\em {Stochastic Processes and their Applications}},
  116(8):1161--1184, 2006.

\bibitem{houdre2007layered}
C.~Houdr{\'e} and R.~Kawai.
\newblock On layered stable processes.
\newblock {\em Bernoulli}, 13(1):252--278, 2007.

\bibitem{imai2011finite}
J.~Imai and R.~Kawai.
\newblock On finite truncation of infinite shot noise series representation of
  tempered stable laws.
\newblock {\em {Physica A: Statistical Mechanics and its Applications}},
  390(23-24):4411--4425, 2011.

\bibitem{imai2013numerical}
{J. Imai and R. Kawai}.
\newblock {Numerical inverse L{\'e}vy measure method for infinite shot noise
  series representation}.
\newblock {\em {Journal of Computational and Applied Mathematics}},
  253:264--283, 2013.

\bibitem{kallenberg2002foundations}
O.~Kallenberg.
\newblock {\em {Foundations of Modern Probability}}.
\newblock Springer-Verlag New York, second edition, 2002.

\bibitem{kawai2016higher}
R.~Kawai.
\newblock {Higher order fractional stable motion: hyperdiffusion with heavy
  tails}.
\newblock {\em {Journal of Statistical Physics}}, 165(1):126--152, 2016.

\bibitem{kawai2017sample}
R.~Kawai.
\newblock {Sample path generation of L{\'e}vy-driven continuous-time
  autoregressive moving average processes}.
\newblock {\em Methodology and Computing in Applied Probability},
  19(1):175--211, 2017.

\bibitem{lacaux2004series}
C.~Lacaux.
\newblock {Series representation and simulation of multifractional L{\'e}vy
  motions}.
\newblock {\em Advances in Applied Probability}, 36(1):171--197, 2004.

\bibitem{leguevel2012ferguson}
R.~Le~Gu{\'e}vel and J.~L. V{\'e}hel.
\newblock {A Ferguson--Klass--LePage series representation of multistable
  multifractional motions and related processes}.
\newblock {\em Bernoulli}, 18(4):1099--1127, 2012.

\bibitem{rosinski1989path}
J.~Rosi{\'n}ski.
\newblock On path properties of certain infinitely divisible processes.
\newblock {\em Stochastic Processes and their Applications}, 33(1):73--87,
  1989.

\bibitem{rosinski2001series}
J.~Rosi{\'n}ski.
\newblock {Series representations of L{\'e}vy processes from the perspective of
  point processes}.
\newblock In {\em L{\'e}vy Processes}, pages 401--415. Springer, 2001.

\bibitem{rosinski2007tempering}
J.~Rosi{\'n}ski.
\newblock Tempering stable processes.
\newblock {\em {Stochastic Processes and their Applications}}, 117(6):677--707,
  2007.

\bibitem{rosinski2018representations}
J.~Rosi{\'n}ski.
\newblock Representations and isomorphism identities for infinitely divisible
  processes.
\newblock {\em The Annals of Probability}, 46(6):3229--3274, 2018.

\bibitem{samorodnitsky1994stable}
G.~Samorodnitsky and M.~S. Taqqu.
\newblock {\em {Stable Non-Gaussian Random Processes}}.
\newblock Chapman \& Hall, New York, 1994.

\bibitem{2021arXiv210110533Y}
S.~{Yuan} and R.~{Kawai}.
\newblock {Numerical aspects of shot noise representation of infinitely
  divisible laws and related processes}.
\newblock {\em arXiv e-prints}, page arXiv:2101.10533, Jan. 2021.

\end{thebibliography}

\appendix

\section{Proofs}\label{appendix proofs}

Define $\varphi_a({\bf x};{\bf y}):=e^{i\langle {\bf y},{\bf x}\rangle}-1-i\langle {\bf y},{\bf x}\rangle \mathbbm{1}_{(0,1]}(\|{\bf x}\|)$ and $\varphi_b({\bf x};{\bf y}):=e^{i\langle {\bf y},{\bf x}\rangle}-1-i\langle {\bf y},{\bf x}\rangle$.
Note that
\begin{equation}\label{varphi a b}
	\varphi_a({\bf x};{\bf y})=\varphi_b({\bf x};{\bf y})+i\langle {\bf y},{\bf x}\rangle \mathbbm{1}_{(1,+\infty)}(\|{\bf x}\|),\quad 
	|\varphi_a({\bf x};{\bf y})|\le \frac{1}{2}\|{\bf y}\|^2\|{\bf x}\|^2\mathbbm{1}_{(0,1]}(\|{\bf x}\|)+2\times \mathbbm{1}_{(1,+\infty)}(\|{\bf x}\|),\quad 
	 \left|\varphi_b({\bf x};{\bf y})\right|\le \frac{1}{2}|\langle {\bf x},{\bf y}\rangle|^2,
\end{equation}
for all $({\bf x},{\bf y})\in \mathbb{R}^d\times \mathbb{R}^d$.
Recall the notation $[c]_B:=c\mathbbm{1}_B(\|c\|)$ for $c\in\mathbb{R}^q$ and $B\in\mathcal{B}(0,+\infty)$ for general dimension $q\in\mathbb{N}$.

\begin{proof}[Proof of Proposition \ref{proposition limiting degeneracy}]
	Fix $l\in \mathbb{N}$, let $\{t_k\}_{k\in\{1,\cdots,l\}}$ be a sequence of constants in $[0,T]$, and let $\{\theta_k\}_{k\in\{1,\cdots,l\}}$ be a sequence of constants in $\mathbb{R}$.
	To ease the notation, we denote $\psi(s):=\sum_{k=1}^l \theta_k  f(t_k,s)$.
	For the convergence of $\{Q_t(m):\,t\in [0,T]\}$ in $m$, it suffices to show the pointwise convergence: for every ${\bf y}\in \mathbb{R}^d$,
	\[
	\mathbb{E}\left[\exp\left[i\left\langle {\bf y},\sum\nolimits_{k=1}^l \theta_k Q_{t_k}(m)\right\rangle \right]\right]=\exp\left[\int_{\mathcal{T}}\int_{\mathbb{R}_0^d}\varphi_a\left(\psi(s){\bf z};{\bf y}\right)(\nu-\nu_m)(d{\bf z})ds\right]\to 1,
\]
	as $m\to +\infty$.
	Fix ${\bf y}\in\mathbb{R}^d$.
	It holds by \eqref{varphi a b} that  
	\begin{align*}
		\left|\int_{\mathcal{T}}\int_{\mathbb{R}_0^d}\varphi_a\left(\psi(s){\bf z};{\bf y}\right)(\nu-\nu_m)(d{\bf z})ds\right|
		&\le \int_{\mathcal{T}}\int_m^{+\infty}\int_{\Omega}\left|\varphi_b\left(\psi(s)H(r,U(\omega));{\bf y}\right)\right|\mathbb{P}(d\omega)dr\,ds\\
		&\qquad + \|{\bf y}\|\int_{\mathcal{T}}\int_1^{+\infty}\int_{\Omega}m\left[\left\|\psi(s)H(mr,U(\omega))\right\|\right]_{(1,+\infty)}\mathbb{P}(d\omega)dr\,ds,
	\end{align*}
	where the first term tends to zero, since 
	\begin{align*}
		\int_{\mathcal{T}}\int_m^{+\infty}\int_{\Omega}\left|\varphi_b\left(\psi(s)H(r,U(\omega));{\bf y}\right)\right|\mathbb{P}(d\omega)dr\,ds
		& \le \frac{1}{2}\int_{\mathcal{T}}\int_m^{+\infty}\int_{\Omega}\left|\left\langle {\bf y}, \psi(s)H(r,U(\omega))\right\rangle \right|^2\mathbb{P}(d\omega)dr\,ds \\
		&=\frac{1}{2}\left\langle {\bf y},\int_{\mathcal{T}}\psi(s)\sigma_m^2(\psi(s))^{\top}ds \,{\bf y}\right\rangle\to 0,
	\end{align*}
	where the passage to the limit can be justified by the dominated convergence theorem due to Assumption \ref{sufficient conditions for limiting degeneracy} (a) and (b).
	For the second term, the integrand $m\left[\left\|\psi(s)H(mr,U(\omega))\right\|\right]_{(1,+\infty)}$ eventually vanishes for $(\mathbb{P}\otimes {\rm Leb}\otimes {\rm Leb})$-$a.e.$ $(\omega,r,s)\in \Omega\times (1,+\infty)\times \mathcal{T}$, due to Assumption \ref{sufficient conditions for limiting degeneracy} (c).
	

Next, noting that the limiting process $\{\lim_{m\to +\infty}R_t(m,n):\,t\in [0,T]\}(=:\{R_t(n):\,t\in [0,T]\})$ exists, it suffices, in a similar manner to (i), to observe that 
\[
\mathbb{E}\left[\exp\left[i\langle {\bf y},\sum\nolimits_{k=1}^l \theta_k R_{t_k}(n)\rangle \right]\right]=\exp\left[\int_{\mathcal{T}\setminus \mathcal{T}_n}\int_{\mathbb{R}_0^d}\varphi_a\left(\psi(s){\bf z};{\bf y}\right)\nu(d{\bf z})ds\right]\to 1,
\]
for ${\bf y}\in \mathbb{R}^d$, due to $\cup_n \mathcal{T}_n= \mathcal{T}$.
\end{proof}

\begin{proof}[Proof of Theorem \ref{theorem Gaussian approximation}]
	Throughout, we let $m$ be large enough to satisfy Assumption \ref{sufficient conditions for weak convergence} (a).
	
For (i), fix $l\in \mathbb{N}$, let $\{t_k\}_{k\in\{1,\cdots,l\}}$ be a sequence of constants in $[0,T]$, and let $\{\theta_k\}_{k\in\{1,\cdots,l\}}$ be a sequence of constants in $\mathbb{R}$.
	It suffices to show the pointwise convergence: for every ${\bf y}\in \mathbb{R}^d$,
	\begin{align*}
		\ln \mathbb{E}\left[\exp\left[i\left\langle {\bf y},\sum\nolimits_{k=1}^l \theta_k \widetilde{Q}_{t_k}(m;\mathcal{T})\right\rangle \right]\right]
		&=\int_{\mathcal{T}}\int_{\mathbb{R}_0^d}\varphi_b\left(\psi(s)\sigma_m^{-1}{\bf z};{\bf y}\right)(\nu-\nu_m)(d{\bf z})ds\\
		&\qquad +i\left\langle {\bf y},\int_{\mathcal{T}}\int_{\mathbb{R}_0^d}\left[\psi(s)\sigma_m^{-1}{\bf z}\right]_{(1,+\infty)} (\nu-\nu_m)(d{\bf z})ds\right\rangle\\
		&\to -\frac{1}{2}\left\langle {\bf y},\int_{\mathcal{T}}(\psi(s))^{\otimes 2}ds \,{\bf y}\right\rangle,
	\end{align*}
	where $\psi(s):=\sum_{k=1}^l \theta_k  f(t_k,s)$, as in the proof of Theorem \ref{proposition limiting degeneracy}.
	Hereafter, we fix ${\bf y}\in\mathbb{R}^d$ throughout.
	First, we show that
	\begin{align*}
		\int_{\mathcal{T}}\int_{\mathbb{R}_0^d}\varphi_b\left(\psi(s)\sigma_m^{-1}{\bf z};{\bf y}\right)(\nu-\nu_m)(d{\bf z})ds
		&=\int_{\mathcal{T}}\int_m^{+\infty}\int_{\Omega}\varphi_b\left(\psi(s)\sigma_m^{-1}H(r,U(\omega));{\bf y}\right)\mathbb{P}(d\omega)dr\,ds\\
		&\sim -\frac{1}{2}\int_{\mathcal{T}}\int_1^{+\infty}\int_{\Omega}m\left|\left\langle {\bf y},\psi(s)\sigma_m^{-1}H(mr,U(\omega))\right\rangle\right|^2\mathbb{P}(d\omega)dr\,ds\\
		&=-\frac{1}{2}\left\langle {\bf y},\int_{\mathcal{T}}(\psi(s))^{\otimes 2}ds \,{\bf y}\right\rangle,
	\end{align*}
	where the asymptotic equivalence remains to be justified.
	Assumption \ref{sufficient conditions for weak convergence} (a) reads that for every $\kappa >0$,
	\[
	\int_{\|\sigma_m^{-1}{\bf z}\|^2 >\kappa} \left\|\sigma_m^{-1}{\bf z}\right\|^2 (\nu-\nu_m)(d{\bf z})
	=\int_1^{+\infty}\int_{\Omega} \left[\left\|\sigma_m^{-1}H(mr,U(\omega))\right\|^2\right]_{(\kappa,+\infty)} \mathbb{P}(d\omega)dr \to 0, 
	\]
	as $m\to +\infty$.
	This, along with Assumption \ref{sufficient conditions for limiting degeneracy} (a), ensures that for each $\epsilon>0$, there exists $m_{\epsilon}\in \mathbb{N}$ such that for every $m\ge m_{\epsilon}$,
	\[
	\left|\frac{\varphi_b(\psi(s)\sigma_m^{-1}H(mr,U(\omega));{\bf y})}{-\frac{1}{2}|\langle {\bf y},\psi(s)\sigma_m^{-1}H(mr,U(\omega))\rangle|^2}-1\right|<\epsilon,\quad (\mathbb{P}\otimes {\rm Leb}\otimes {\rm Leb})\text{-}a.e.\, (\omega,r,s)\in \Omega\times (1,+\infty)\times \mathcal{T},
	\]
	that is, for every $m\ge m_{\epsilon}$,
	\begin{multline*}
		\int_{\mathcal{T}}\int_1^{+\infty}\int_{\Omega}\left|m\varphi_b\left(\psi(s)\sigma_m^{-1}H(mr,U(\omega));{\bf y}\right)+\frac{1}{2}m\left|\left\langle {\bf y},\psi(s)\sigma_m^{-1}H(mr,U(\omega))\right\rangle\right|^2\right|\mathbb{P}(d\omega)dr\,ds\\
		\le \frac{1}{2}\epsilon \int_{\mathcal{T}}\int_1^{+\infty}\int_{\Omega}m\left|\left\langle {\bf y},\psi(s)\sigma_m^{-1}H(mr,U(\omega))\right\rangle\right|^2\mathbb{P}(d\omega)dr\,ds
		= \frac{1}{2}\epsilon\left\langle {\bf y},\int_{\mathcal{T}}(\psi(s))^{\otimes 2}ds \,{\bf y}\right\rangle,
	\end{multline*}
	again due to \eqref{varphi a b}, which justifies the desired asymptotic equivalence since $\epsilon$ can be chosen arbitrarily small.
	It remains to observe that  
	\begin{align*}
		\left\|\int_{\mathcal{T}}\int_{\mathbb{R}_0^d}\left[\psi(s)\sigma_m^{-1}{\bf z}\right]_{(1,+\infty)} (\nu-\nu_m)(d{\bf z})ds\right\|
		&\le \int_{\mathcal{T}}\int_m^{+\infty}\int_{\Omega}\left[\left\|\psi(s)\sigma_m^{-1}H(r,U(\omega))\right\|\right]_{(1,+\infty)} \mathbb{P}(d\omega)dr\,ds\\
		&\le \int_{\mathcal{T}}\int_1^{+\infty}\int_{\Omega}\left[\left\|\psi(s)\sigma_m^{-1}H(mr,U(\omega))\right\|^2\right]_{(1,+\infty)} \mathbb{P}(d\omega)dr\,ds,
	\end{align*}
which tends to zero as $m\to +\infty$, due to Assumption \ref{sufficient conditions for limiting degeneracy} (a) and Assumption \ref{sufficient conditions for weak convergence} (b).
	
For (ii) and (iii), recall that the time domain here is $\mathcal{S}$, which is a subset of $\mathcal{T}$ of (i).
First, consider a c\`adl\`ag version of $\{\widetilde{Q}_t(m;\mathcal{S}):\,t\in [0,T]\}$.
	Let $\{\tau_m\}_{m\in\mathbb{N}}$ be a sequence of stopping times (with respect to the filtration generated by $\{\widetilde{Q}_t(m;\mathcal{S}):t\in [0,T]\}$) taking values in $[0,T]$ and let $\{h_m\}_{m\in\mathbb{N}}$ be a sequence of positive constants decreasing to zero.
	Then, the desired result follows from \cite[Theorem 16.11]{kallenberg2002foundations}, if 
	\[
	\left\|\widetilde{Q}_{\tau_m+h_m}(m;\mathcal{S})-\widetilde{Q}_{\tau_m}(m;\mathcal{S})\right\|=\left\|\int_{\mathcal{S}}\int_{\mathbb{R}_0^d}\left(f(\tau_m+h_m,s)-f(\tau_m,s)\right)\sigma_m^{-1}{\bf z}((\mu-\mu_m)-(\nu-\nu_m)\mathbbm{1}_{(0,1]}(\|{\bf z}\|))(d{\bf z},ds)\right\|\stackrel{\mathbb{P}}{\to} 0,
	\]
as $m\to +\infty$.
	To show that the last convergence holds true, it suffices to observe that 
	\begin{align*}
	&\mathbb{E}\left[\left\|\int_{\mathcal{S}}\int_{\mathbb{R}_0^d}\left(f(\tau_m+h_m,s)-f(\tau_m,s)\right)\sigma_m^{-1}{\bf z}((\mu-\mu_m)-(\nu-\nu_m)(d{\bf z},ds)\right\|^2\right]\\
	&\qquad =\mathbb{E}\left[\int_{\mathcal{S}}\int_{\mathbb{R}_0^d}\left\|\left(f(\tau_m+h_m,s)-f(\tau_m,s)\right)\sigma_m^{-1}{\bf z}\right\|^2(\nu-\nu_m)(d{\bf z})ds\right]\\
	&\qquad =\mathbb{E}\left[\int_{\mathbb{R}_0^d}(\sigma_m^{-1}{\bf z})^{\top}\left(\int_{\mathcal{S}}\left(f(\tau_m+h_m,s)-f(\tau_m,s)\right)^{\otimes 2}ds\right)(\sigma_m^{-1}{\bf z})(\nu-\nu_m)(d{\bf z})\right]\le c_2 h_m^{c_1}d\to 0,
	\end{align*}
	where we have applied the Wiener-Ito isometry, Assumption \ref{sufficient conditions for weak convergence} (c) and the identity
	\[
	\int_{\mathbb{R}_0^d}\left\langle {\bf z}, (\sigma^2_m)^{-1}{\bf z}\right\rangle(\nu-\nu_m)(d{\bf z})=\int_{\mathbb{R}_0^d}\|{\bf z}\|^2\rho_m(d{\bf z})={\rm tr}\left(\int_{\mathbb{R}_0^d}{\bf z}^{\otimes 2}\rho_m(d{\bf z})\right)={\rm tr}(\mathbb{I}_d)=d,
	\]
	where $\rho_m=(\nu-\nu_m)\circ \sigma_m$ denotes the push forward of the L\'evy measure $(\nu-\nu_m)$ by the map ${\bf z}\to \sigma_m^{-1}{\bf z}$.

	Finally, for a continuous version of $\{\widetilde{Q}_t(m;\mathcal{S}):\,t\in [0,T]\}$, it holds, as before, that for $0\le t_1\le t_2\le T$,
	\[
		\mathbb{E}\left[\left\|\int_{\mathcal{S}}\int_{\mathbb{R}_0^d}\left(f(t_2,s)-f(t_1,s)\right)\sigma_m^{-1}{\bf z}((\mu-\mu_m)-(\nu-\nu_m)(d{\bf z},ds)\right\|^2\right]
		\le c_2(t_2-t_1)^{c_1}d,
	\]
	where the last term is independent of $m$.
	Here, 	It remains to show that the deterministic residual term tends to zero uniformly, as follows:
	\[
	\sup_{t\in [0,T]}\left\|\int_{\mathcal{S}}\int_{\mathbb{R}_0^d}f(t,s)\sigma_m^{-1}{\bf z}(\nu-\nu_m)\mathbbm{1}_{(1,+\infty)}(\|{\bf z}\|))(d{\bf z},ds)\right\|
	\le \int_{\mathcal{S}}\int_1^{+\infty}\int_{\Omega} \sup_{t\in [0,T]}\left\|f(t,s)m\sigma_m^{-1}[H(mr,U(\omega))]_{(1,+\infty)}\right\|\mathbb{P}(d\omega)dr\,ds,
	\]
which tends to zero as $m\to +\infty$, since the term $[H(mr,U(\omega))]_{(1,+\infty)}$ eventually vanishes for almost every $(\omega,r)$, as well as due to Assumption \ref{sufficient conditions for limiting degeneracy} (a).
	Hence, the claim holds by \cite[Corollary 16.9]{kallenberg2002foundations}.	
\end{proof}

\end{document}